\documentclass[a4paper,12pt,reqno]{amsart}
\usepackage{amsmath,amsfonts,amssymb,amsthm,enumerate,multicol}
\usepackage{tikz,graphicx}
\usepackage{stackengine}
\usepackage{subfigure,hyperref}
\usepackage{caption,enumitem,wasysym}
\usepackage{amsmath,amssymb}
\usepackage{cleveref}
\usepackage{mathrsfs}
\usepackage{physics}
\usepackage{graphics,graphicx}
\usepackage{stackengine}
\usepackage{bigints}
\usepackage{geometry}
\usepackage{epstopdf}
\usepackage{epsfig}
\usepackage{caption,subcaption}
\usepackage{mathtools}
\usepackage{ragged2e}
\usepackage[super]{nth}
\usepackage{float,wrapfig}
\captionsetup{font=scriptsize,labelfont=small}

\usepackage[labelsep=space]{caption}
\usepackage[font=footnotesize]{caption}
\usepackage{xcolor}

\newtheorem{theorem}{Theorem}[section]
\newtheorem{definition}{Definition}[section]

\newtheorem{corollary}{Corollary}[section]
\newtheorem{remark}{Remark}[section]

\newtheorem{case}{Case}
\thispagestyle{empty}
\numberwithin{equation}{section}

\setlength{\textwidth}{16cm}
\setlength{\textheight}{24cm}
\setlength{\evensidemargin}{0cm}
\setlength{\oddsidemargin}{0cm}
\setlength{\topmargin}{-0.50cm}
\setlength{\footskip}{40pt}

\title{INTEGRAL REPRESENTATION AND FUNCTIONAL INEQUALITIES INVOLVING GENERALIZED POLYLOGARITHM}
\author{Deepshikha Mishra$^\dagger$}
\address{$^\dagger$Department of Mathematics\\ Indian Institute of Technology, Roorkee-247667, Uttarakhand, India}
\email{deepshikha\_m@ma.iitr.ac.in}
\author[A. Swaminathan]{A. Swaminathan\, $^{\ddagger}$}
\address{$^\ddagger$Department of Mathematics\\ Indian Institute of Technology, Roorkee-247667, Uttarakhand, India}
\email{mathswami@gmail.com, a.swaminathan@ma.iitr.ac.in}
\pagestyle{myheadings}

\allowdisplaybreaks
\bigskip

\begin{document}
	\keywords{polylogarithm; generalized hypergeomteric functions; integral representations; special functions; Complete monotonicity; Hadamard Convolution; Series representations}
	
	\subjclass[2020] {Primary 33B30, 26D07, 30E20}
	
	\begin{abstract}
	The purpose of this manuscript is to derive two distinct integral representations of the generalized polylogarithm using two different techniques. The first approach involves the Dirichlet series and its Laplace representation, which leads to a single integral representation. The second approach utilizes the Hadamard convolution, resulting in a double integral representation. As a consequence, an integral representation of the Lerch transcendent function is obtained. Furthermore, we establish properties such as complete monotonicity, Turan inequality, convexity, and bounds of the generalized polylogarithm. Finally, we provide an alternative proof of an existing integral representation of the generalized polylogarithm using the Hadamard convolution.
	\end{abstract}
	
	\maketitle
	\markboth{Deepshikha Mishra and A. Swaminathan}{Special functions}
	
	\section{Introduction}
	The formulation of numerous problems in physics and mathematics involves the theory of  Green's function. In some formulations, Green's function expands as a series of Clausen functions, and the solution involves multiple differentiations to turn these functions into logarithmic expressions, and eventual reintegration. To address this particular issue, Leibniz \cite{Leibniz_1962_polylogarithm} introduced the concept of polylogarithm.
	\begin{definition}\cite{Lewin_1981_polylogarithm and associated functions}
		The polylogarithm function or Jonquire's function, denoted as $Li_p(z)$   is defined by the infinite sum
		\begin{equation}\label{Polylogarithm equation}
			Li_p(z)=\sum_{n=0}^\infty \frac{z^n}{n^p},
		\end{equation}
		for all complex numbers $z$ satisfying $|z|< 1$ and $p \in \mathbb{C}$.
		However, for  $|z|=1$, $p \in \mathbb{C}$ satisfies $\Re (p) >1$.
	\end{definition}
	When $p =1$, \eqref{Polylogarithm equation} simplifies to $Li_1(z)=-\log(1-z)$, the well known logarithm function. For $ p = 2, 3 $, \eqref{Polylogarithm equation} reduces to $ Li_2(z) = \sum_{n=0}^\infty \frac{z^n}{n^2} $ and $ Li_3(z) = \sum_{n=0}^\infty \frac{z^n}{n^3} $, which are the well-known dilogarithm and trilogarithm functions, respectively. Lewin studied the behavior of dilogarithm and trilogarithm functions and provided a detailed theory of the polylogarithm in the monograph \textit{Polylogarithms and Associated Functions} \cite{Lewin_1981_polylogarithm and associated functions}. The theory of polylogarithm encompasses numerous properties, such as integral representations, functional relations, series expansions, linear and quadratic transformations, and numerical values for specific arguments. Integral representations of special functions have always been a topic of great interest because they provide alternative ways of expressing these functions and can be used to establish relationships between different special functions. Our primary objective in this discussion is to concentrate on the integral representation of the polylogarithm function. Several integral representations of polylogarithms are already available in the literature. Some fundamental integral forms are
	\begin{align*}
		Li_p(z)=\frac{z}{\Gamma(p)}\int_0^\infty \frac{t^{p-1}}{e^t-z}dt, \quad  |\arg(1-z)|<\pi, \  \Re(p)>0,
	\end{align*}
	and
	\begin{align*}
		Li_p(z)=\frac{z}{\Gamma(p)}\int_1^\infty \frac{\log^{p-1}t}{t(t-z)}dt=\frac{z}{\Gamma(p)}\int_0^1 \frac{{\log ^{p-1}(1/t)}}{1-zt}dt, \quad |\arg(1-z)|<\pi, \Re(p)>0,
	\end{align*}
	given in \cite{Prudnikov_1988_integral and series vol 2, Prudnikov_1990_integral and series vol 3}.
	To know more about polylogarithms, we refer to \cite{Lewin_polylogarithmic ladders, Lewin_1981_polylogarithm and associated functions} and references therein.
	As the theory of polylogarithms has evolved, numerous authors have studied it and introduced some of its generalizations. In \cite{Nielsen_1986_generalized polylogarithm}, Nielsen  introduced the first generalization of the polylogarithm function, referred to as the Nielsen generalized polylogarithm. Among this and various other generalizations, this manuscript focuses on a specific generalization of the polylogarithm, as outlined below.
	\begin{definition}\cite{Saiful_Swami_2010_geometric properties of polylogairthm}
		The generalized polylogarithm function, denoted as $\Phi_{p, q}(a,b;z)$, is defined by the following normalized expression
		\begin{equation}\label{generalized polylogarithm equation}
			\Phi_{p, q}(a,b;z) = \sum_{k=1}^\infty \frac{(1+a)^p(1+b)^q}{(k+a)^p(k+b)^q}z^k, \quad |z|<1,
		\end{equation}
		where $p$ and $q$ represent complex numbers with $\Re(p)>0$ and $\Re(q)>0$, while $a, b> 0$.
	\end{definition}
	This generalization given by \eqref{generalized polylogarithm equation} includes many well-known special functions as particular cases that are detailed below.
	\begin{enumerate}
		\item If $a=b,p+q= s$  then,
		\begin{align*}
			\Phi_{p, q}(a,b;z)=(1+a)^s \sum_{k=1}^{\infty} \frac{z^k}{(k+a)^s},
		\end{align*}
		 where the function
		\begin{align}\label{lerch transcendent equation}
			\Psi(z,s,a)= \sum_{n=0}^\infty \frac{z^n}{{(n+a)}^s}, \quad |z|<1,
		\end{align}
		with $a>0 , s \in \mathbb{C} $ such that $ \Re(s) > 0 $ is known as the \textit{Lerch transcendent function}, which serves as the generalization of both the \textit{Hurwitz zeta function} and the \textit{Riemann zeta function}.
		\item If $a=b=0,p+q=r \in \mathbb{C}, \Re(r)>0$ then $\Phi_{p, q}(a,b;z)=Li_r(z)$ which is again the polylogarithm function, given by \eqref{Polylogarithm equation}.
	\end{enumerate}
	The generalized polylogarithm \eqref{generalized polylogarithm equation} is  associated with the generalized hypergeometric function through the following relation \cite{Saiful_Swami_2010_geometric properties of polylogairthm}
	\begin{align*}
	\Phi_{p,q}(a,b;z)=z({}_{p+q+1}F_{p+q}(a_1,a_2,a_3,...,a_{p+q+1};c_1,c_2,c_3,...,c_{p+q})),
	\end{align*}
	where $p,q$ are natural numbers,
	\[a_i= 	\begin{cases}
		1,& \text{if } i=1\\
		1+a,  & \text{if } i=2,3,...,p+1\\
		1+b, & \text{if } i=p+2,p+3,...,p+q+1,
	\end{cases}\]
	and
	\[c_i= 	\begin{cases}
		
		2+a, & \text{if } i=1,2,3,...,p\\
		2+b, & \text{if } i=p+1,p+2,p+3,...,p+q.
	\end{cases}\]
Here ${}_pF_q(z)$ denotes the generalized hypergeometric function which is defined \cite{Rainville_1960_Special functions} as 
	\begin{align}\label{Generalized hypergeometric series}
	{}_pF_q(z) &= \sum_{k=0}^{\infty} \frac{(a_1)_k (a_2)_k \cdots (a_p)_k}{(b_1)_k (b_2)_k \cdots (b_q)_k} \frac{z^k}{k!},
\end{align}
where $(a)_k$ is the Pochhammer symbol given by
\begin{align*}
	(a)_k &=
	\begin{cases}
		1 & \text{if } k = 0, \\
		a(a+1)(a+2)\cdots(a+k-1) & \text{if } k > 0.
	\end{cases}
\end{align*}

In \eqref{Generalized hypergeometric series}, any denominator term cannot be zero or negative integer, and if any numerator parameter is zero, the series terminates. The convergence of \eqref{Generalized hypergeometric series} depends on $p$ and $q$ such that
\begin{enumerate}
	\item If $p \leq q$, the series converges for all finite $z$.
	\item If $p = q + 1$, the series converges for $|z| < 1$ and diverges for $|z| > 1$. For $|z| = 1$, absolute convergence occurs if
	\begin{align}
		\Re\left(\sum_{j=1}^q b_j - \sum_{i=1}^p a_i\right) &> 0.
	\end{align}
	\item If $p > q + 1$, the series converges only at $z = 0$.
\end{enumerate}

For $p = 2, q = 1$, ${}_pF_q(z)$ reduces to the Gauss hypergeometric function, and for $p = q = 1$, it simplifies to the confluent hypergeometric function. More details can be found in \cite{Andrews_1999_Special functions, Rainville_1960_Special functions}.

In addition to the aforementioned connection with the generalized hypergeometric function, there are other significant contributions in the field of generalized polylogarithms. In \cite{Maximon_2003_the dilogarithm function for complex argument}, a comprehensive overview of the dilogarithm function and its integral forms was provided, highlighting its significance. Subsequently, the polylogarithm and its integral representations, which are closely related to Clausen functions, were examined in \cite{Cvijovic_2007_integral representation of polylogarithm}. Additionally, new integral expressions for the Mathieu series and the COM–Poisson normalization constant were introduced in \cite{Pogany_2005_integral representation of mathieu series,Pogany_2018_integral form of le roy function}. Inspired by these previous studies, this article primarily centres on expressing the generalized polylogarithm through integral representation.

\subsection{Motivation}
Integral representations of special functions are often more convenient for analyzing various properties of these functions, including their complete monotonicity, asymptotic behavior, inequalities, and analytic properties such as whether the function belongs to the class of Pick functions, Stieltjes functions, Bernstein functions, and so on. Nowadays, finding integral representations of special functions is gaining significant attention and is being studied extensively. For instance, in \cite{Berg_2024_A complete Bernstein function related to the fractal dimension of Pascal’s pyramid modulo a prime}, the integral representation of the function $\frac{\log(1+r z)}{\log(1+z)}$ is derived, which helps in determining the specific class to which the function belongs. Additionally, properties such as convexity and monotonicity are also explored. Further examples of the applications of integral representations can be found in \cite{Alzer_1998_Inequalities for the polygamma functions, Berg_2024_A complete Bernstein function related to the fractal dimension of Pascal’s pyramid modulo a prime, Koumandos_2015_On asymptotic expansions of generalized Stieltjes functions, Pedersen_2005_On the remainder in an asymptotic expansion of the double gamma function}.

\par
In this manuscript, we have derived two distinct integral representations of the generalized polylogarithm: one involving a single integral and the other involves a double integral. A detailed proof can be found in section \ref{Generalized polylogarithm as a single integral} and section \ref{Generalized polylogarithm as a double integral}.


\subsection{Organization}
The structure of this work is as follows: In Section \ref{preliminaries section}, we begin by introducing the Dirichlet series and its Laplace representation, followed by definitions and theorems related to completely monotone functions and Hadamard convolution. We then define the Lerch transcendent function \eqref{lerch transcendent equation} and provide its one-parameter series representation. In Section \ref{Generalized polylogarithm as a single integral}, we focus on deriving the integral representation of the generalized polylogarithm $\Phi_{p, q}(a, b; z)$ involving only a single integral, as given in Theorem \ref{integral representation single integral theorem}. Furthermore, as a special case, we obtain the integral representation of the Lerch transcendent function. Moreover, we establish bounds for the generalized polylogarithm $\Phi_{p, q}(a, b; z)$ and explore the complete monotonicity and Turan inequality for $\psi(p)$ given in \eqref{value of psi(p)}. We also examine the Turan inequality for the generalized polylogarithm $\Phi_{p, q}(a, b; z)$ and conclude the section. Finally, in Section \ref{Generalized polylogarithm as a double integral}, we derive an integral representation of the generalized polylogarithm $\Phi_{p, q}(a, b; z)$ involving a double integral and conclude the section by re-establishing a previously known integral form of the generalized polylogarithm $\Phi_{p, q}(a, b; z)$.


\section{preliminaries}\label{preliminaries section}
This section provides the preliminary definitions and theorems that we use later to derive the integral form of the generalized polylogarithm. We begin by discussing the Dirichlet series and one of its most significant integral representation. 
\begin{definition}\cite{Hardy_1964_Dirichlet series}
A series of the form
	\begin{equation}\label{Dirichlet series}
		f(\xi) = \sum_{n=1}^\infty \alpha_n e^{-\beta_n \xi},
	\end{equation}
	is known as a Dirichlet series of type $\beta_n$, where $\beta_n$ is an increasing sequence of real numbers that diverges to infinity, $\alpha_n$ is a sequence of complex numbers, and $\xi$ is a complex variable.
\end{definition}
If $\beta_n = n$, the Dirichlet series \eqref{Dirichlet series} simplifies to $f(\xi) = \sum_{n=1}^\infty \alpha_n e^{-n \xi}$, which represents a power series in $e^{-\xi}$. Alternatively, if $\beta_n = \log n$, the series \eqref{Dirichlet series} transforms into
	\begin{align*}
		f(\xi) &= \sum_{n=1}^\infty \alpha_n n^{-\xi},
	\end{align*}
which is known as an ordinary Dirichlet series. Over time, Dirichlet series gained significant attention when the first attempt \cite{Cahen_1894_Laplace representation of dirichlet series} was made to construct a systematic theory of the function $f(\xi)$ defined in \eqref{Dirichlet series}, which became the starting point for most later research in related subjects \cite{Cahen_1894_Laplace representation of dirichlet series, Hardy_1964_Dirichlet series}. One of the important results discussed in \cite{Cahen_1894_Laplace representation of dirichlet series} is the Laplace integral representation for the Dirichlet series \eqref{Dirichlet series}, which can be written as
\begin{align}\label{Laplace representation of dirichlet series}
	f(\xi) &= \xi \int_0^\infty e^{-\xi t} \sum_{n: \beta_n \leq t} a_n  dt = \xi \int_0^\infty e^{-\xi t} \sum_{n=1}^{\lfloor\beta^{-1}(t)\rfloor} \alpha_n  dt, \quad \Re (\xi)>0,
\end{align}
where $\beta: \mathbb{R}^+ \to \mathbb{R}^+$ is a monotonically increasing function with a unique inverse $\beta^{-1}$, and $\beta|_{\mathbb{N}} = \beta_n$, and $\lfloor \cdot \rfloor$ denotes the greatest integer function. We use the representation \eqref{Laplace representation of dirichlet series} as a fundamental tool to derive the integral form of the generalized polylogarithms, which is in single integral form.

\begin{definition}\cite{Widder_1941_the laplace transform}\label{complete monotone function definition}
A function  $f(x)$ is said to be completely monotonic on an interval $(a,b)$ if it is infinitely differentiable, i.e., $f(x) \in C^\infty$, and satisfies the following condition
\begin{align*}
			(-1)^k f^{(k)}(x)\geq 0,
		\end{align*}
		for every non-negative integer $k$.
	\end{definition}
	In addition to its definition, complete monotonicity is frequently characterized by the Bernstein theorem, which is presented below.
	\begin{theorem}\cite{Schilling_2021_bernstein function}\label{Bernstein theorem}{(Bernstein theorem)}
		Let $f: (0,\infty) \mapsto \mathbb{R}$ be a completely monotonic function. Then  it can be expressed as the Laplace transform of a unique measure $\mu$ defined on the interval $[0,\infty)$. In other words, for every $x > 0$,
		\begin{align*}
			f(x)=\mathscr{L}\{(\mu; x)\}= \int_{[0,\infty)} e^{-xt} d\mu(t).
		\end{align*}	
		Conversely, if $\mathscr{L}(\mu; x)< \infty $ for every $x>0$, then the function $x \mapsto \mathscr{L}{(\mu; x)}$ is completely monotone.
	\end{theorem}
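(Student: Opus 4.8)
The plan is to establish the two implications of the theorem separately. The converse (that a finite Laplace transform is completely monotone) is elementary, so I would dispose of it first; the forward direction (that every completely monotone $f$ admits the representation) carries the real weight and uses a reduction to the classical moment problem.

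For the converse, assume $f(x)=\int_{[0,\infty)}e^{-xt}\,d\mu(t)<\infty$ for all $x>0$. First I would justify differentiation under the integral sign on each compact subinterval $[x_0,x_1]\subset(0,\infty)$: the candidate integrand $t^k e^{-xt}$ is dominated by $t^k e^{-x_0 t}$, and since $t^k e^{-(x_0/2)t}$ is bounded while $e^{-(x_0/2)t}$ is $\mu$-integrable (by finiteness of the transform at $x_0/2$), dominated convergence applies. This yields $f^{(k)}(x)=\int_{[0,\infty)}(-t)^k e^{-xt}\,d\mu(t)$, hence $(-1)^k f^{(k)}(x)=\int_{[0,\infty)}t^k e^{-xt}\,d\mu(t)\ge 0$ and $f\in C^\infty$, which is exactly Definition \ref{complete monotone function definition}.

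For the forward direction, let $f$ be completely monotone. My approach is to reduce the continuous statement to the Hausdorff moment problem. The key observation is that complete monotonicity of the derivatives propagates to finite differences: writing $\Delta_h$ for the forward difference of step $h>0$, the iterated identity
\[
(-1)^k\Delta_h^k f(x)=\int_0^h \cdots \int_0^h (-1)^k f^{(k)}(x+u_1+\cdots+u_k)\,du_1\cdots du_k\ge 0
\]
shows that for each fixed $h$ and base point $c>0$ the sequence $n\mapsto f(c+nh)$ is a completely monotone sequence in the discrete sense. By Hausdorff's theorem (itself proved through Bernstein-polynomial approximation and a Helly selection), such a sequence is a moment sequence, so $f(c+nh)=\int_{[0,1]}s^n\,d\beta_h(s)$ for a finite positive measure $\beta_h$ on $[0,1]$, and the substitution $s=e^{-h\tau}$ recasts this as a Laplace transform sampled on the grid $\{c+nh\}$.

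Finally I would let $h\to 0$. After pushing the $\beta_h$ forward to measures on $[0,\infty)$ and establishing uniform bounds on their mass over compact $\tau$-intervals (using the values of $f$), Helly's selection theorem extracts a limiting positive measure $\mu$ with $f(x)=\int_{[0,\infty)}e^{-x\tau}\,d\mu(\tau)$ for every $x>0$. Uniqueness follows from injectivity of the Laplace transform: two measures with the same transform on $(0,\infty)$ integrate every polynomial in $e^{-\tau}$ identically, hence by Stone--Weierstrass every continuous function on the compactification $[0,\infty]$, so they coincide. The hard part will be this limit $h\to 0$: I must control the masses of the approximating measures uniformly and rule out any escape of mass to infinity, so that the Helly limit genuinely reproduces $f$ on all of $(0,\infty)$ and not merely along a single grid. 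An alternative that sidesteps the moment problem is the Post--Widder real inversion, building $d\mu_k(t)=\frac{(-1)^k}{k!}\left(\frac{k}{t}\right)^{k+1}f^{(k)}\!\left(\frac{k}{t}\right)dt$ and proving its weak convergence to $\mu$; there the same analytic difficulty resurfaces as the convergence of these inversion operators.
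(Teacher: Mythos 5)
The paper offers no proof of this statement at all: Theorem \ref{Bernstein theorem} is quoted as known background, with the proof delegated to the cited monograph of Schilling--Song--Vondra\v{c}ek, so there is no in-paper argument to measure your attempt against. Judged on its own terms, your proposal is the classical route (essentially the Hausdorff--Widder proof, which is also the one in the cited reference): the converse via differentiation under the integral sign is correct as written, and the reduction of the forward direction to the Hausdorff moment problem through the identity $(-1)^k\Delta_h^k f(x)=\int_0^h\cdots\int_0^h(-1)^k f^{(k)}(x+u_1+\cdots+u_k)\,du_1\cdots du_k\ge 0$ is the right key lemma.

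Two remarks on the step you flag as ``the hard part,'' one reassuring and one where the real work actually sits. First, the uniform mass bound you worry about is immediate: taking $n=0$ in the moment representation gives $\beta_h([0,1])=f(c)$ for every $h$, so after pushing forward to the compactification $[0,\infty]$ you get weak-$*$ compactness for free, and any mass deposited at $\tau=\infty$ is annihilated by $e^{-x\tau}$ for $x>0$; no escape-of-mass argument beyond this is needed for fixed $c$. Second, the genuine gaps are elsewhere: (i) the limit $h\to 0$ only yields the representation at the grid points actually used, so you must take nested grids (say $h=2^{-m}$, giving compatible representations) and then upgrade from dyadic $x$ to all $x$ by continuity of both sides --- continuity of $f$ is available since $f$ is $C^\infty$, and continuity of the integral follows by dominated convergence; (ii) since $f$ may be unbounded near $0+$ (e.g.\ $f(x)=1/x$, whose representing measure is Lebesgue measure), $\mu$ need not be finite, so the construction produces, for each $c>0$, a measure $e^{c\tau}\,d\tilde\mu_c$ representing $f$ on $(c,\infty)$, and you must patch these together as $c\downarrow 0$ using uniqueness; (iii) in the uniqueness argument itself, Stone--Weierstrass pairing requires finite measures, so you should first damp both measures by $e^{-x_0\tau}$ for some fixed $x_0>0$ and then apply Stone--Weierstrass on $[0,\infty]$ (the family $\{e^{-\lambda\tau}:\lambda\ge 0\}$ contains the constants and separates points there). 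With these executions supplied, your outline is a complete and standard proof; the Post--Widder alternative you mention is likewise viable.
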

	The Bernstein theorem characterizes completely monotonic functions through their representation as Laplace transforms of unique measures. We now turn our attention to the Hadamard convolution, an important concept in complex analysis that relates to the multiplication of holomorphic functions. The Hadamard product facilitates the multiplication of two power series, as detailed in the following theorem.
	\begin{theorem}\cite{Titchmarsh_1958_theory of functions}\label{hadamard multiplication theorem}
		Let $f(z)$ and $g(z)$ be defined by the power series
		\begin{align*}
			f(z)=\sum_{n=0}^{\infty}f_n z^n, \quad g(z)=\sum_{n=0}^{\infty}g_n z^n.
		\end{align*}
		The Hadamard product of the functions $f(z)$ and $g(z)$ is defined as
		\begin{align}\label{hadamard product summation form}
			(f*g)(z)=\sum_{n=0}^\infty f_ng_nz^n,
		\end{align}
		which is well-defined for all $f, g\in\mathcal{H(\mathbb{D})}$.
		Here, $\mathcal{H(\mathbb{D})}$ is the set of all series $\sum_{n=0}^\infty a_nz^n$ defined on the open unit disc $\mathbb{D}$ with the property $\limsup_{n\to\infty}|a_n|^{1/n}\leq1$.
	\end{theorem}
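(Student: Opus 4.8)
The plan is to read "well-defined" as the statement that the Hadamard product series $(f*g)(z)=\sum_{n=0}^\infty f_n g_n z^n$ again belongs to $\mathcal{H}(\mathbb{D})$, so that it converges on the open unit disc $\mathbb{D}$ and defines a holomorphic function there. By the Cauchy--Hadamard formula, a power series $\sum_{n=0}^\infty a_n z^n$ has radius of convergence $R=1/\limsup_{n\to\infty}|a_n|^{1/n}$; hence membership in $\mathcal{H}(\mathbb{D})$, namely $\limsup_{n\to\infty}|a_n|^{1/n}\leq 1$, is precisely the condition $R\geq 1$. Thus it suffices to verify that the coefficient sequence $\{f_n g_n\}$ satisfies $\limsup_{n\to\infty}|f_n g_n|^{1/n}\leq 1$ whenever $\{f_n\}$ and $\{g_n\}$ each satisfy the corresponding bound.

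First I would record the elementary submultiplicativity of the limit superior: for any two sequences of non-negative reals $\{u_n\}$ and $\{v_n\}$ with finite limit superior,
\begin{align*}
\limsup_{n\to\infty}(u_n v_n)\leq\Bigl(\limsup_{n\to\infty}u_n\Bigr)\Bigl(\limsup_{n\to\infty}v_n\Bigr).
\end{align*}
Applying this with $u_n=|f_n|^{1/n}$ and $v_n=|g_n|^{1/n}$, and using the identity $|f_n g_n|^{1/n}=|f_n|^{1/n}\,|g_n|^{1/n}$, yields
\begin{align*}
\limsup_{n\to\infty}|f_n g_n|^{1/n}\leq\Bigl(\limsup_{n\to\infty}|f_n|^{1/n}\Bigr)\Bigl(\limsup_{n\to\infty}|g_n|^{1/n}\Bigr)\leq 1\cdot 1=1.
\end{align*}
This shows $(f*g)\in\mathcal{H}(\mathbb{D})$, establishing that the product is well-defined on $\mathbb{D}$.

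The only delicate point, and the step I expect to require the most care, is the justification of the submultiplicativity inequality above. It is handled by a standard $\varepsilon$-argument: given $\varepsilon>0$, choose $N$ so that $u_n<L_u+\varepsilon$ and $v_n<L_v+\varepsilon$ for all $n\geq N$, where $L_u$ and $L_v$ denote the respective limit superiors; then $u_n v_n<(L_u+\varepsilon)(L_v+\varepsilon)$ for $n\geq N$, whence $\limsup_{n\to\infty}(u_n v_n)\leq(L_u+\varepsilon)(L_v+\varepsilon)$, and letting $\varepsilon\to 0$ gives the claim. Here it is essential that $L_u$ and $L_v$ be finite, which is guaranteed since both are at most $1$; this is exactly what excludes the indeterminate form $0\cdot\infty$. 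Once this lemma is in hand the theorem follows immediately, and I would note that the companion form of the Hadamard multiplication theorem, namely the contour-integral representation $(f*g)(z)=\frac{1}{2\pi i}\oint_{|w|=\rho}f(w)\,g(z/w)\,\frac{dw}{w}$, is not needed for the present well-definedness claim and may be omitted.
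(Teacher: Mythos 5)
Your proof is correct. Note first that the paper offers no proof of this statement at all: it is quoted verbatim from Titchmarsh with a citation, so there is no internal argument to compare against. Your reading of ``well-defined'' as the claim that $\{f_ng_n\}$ again satisfies $\limsup_{n\to\infty}|f_ng_n|^{1/n}\leq 1$, so that the product series converges on $\mathbb{D}$ by Cauchy--Hadamard, is the right one, and your argument is the standard and essentially only one: the identity $|f_ng_n|^{1/n}=|f_n|^{1/n}|g_n|^{1/n}$ together with submultiplicativity of $\limsup$ for non-negative bounded sequences, which you justify correctly by the $\varepsilon$-argument (and you are right that finiteness of both limit superiors, guaranteed here since each is at most $1$, is what rules out the indeterminate form). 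Two small remarks. First, your aside that the contour-integral form $(f*g)(z)=\frac{1}{2\pi i}\oint_{|w|=\rho}f(w)g(z/w)\frac{dw}{w}$ is not needed is accurate for the statement as quoted, but be aware that this integral representation is the part of Titchmarsh's theorem the paper actually uses in earnest (in the proof of its double-integral representation), and it does require a separate residue/uniform-convergence justification; it is simply not part of the claim you were asked to prove. Second, in Titchmarsh the ``Hadamard multiplication theorem'' proper is a deeper statement about the singularities of $f*g$ lying among products of singularities of $f$ and $g$; the paper's quoted version retains only the definitional shell, and you correctly identified that the sole nontrivial content of that shell is the membership $(f*g)\in\mathcal{H}(\mathbb{D})$, which your argument establishes completely.
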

	The Hadamard product \eqref{hadamard product summation form} has the integral representation \cite{Titchmarsh_1958_theory of functions}
	\begin{align*}
		(f*g)(z)=\frac{1}{2\pi i}\int_{|w|=r} f\left(\frac{z}{w}\right)g(w)\frac{dw}{w},\quad \quad |z|<r<1.
	\end{align*}
	This convolution of holomorphic functions is known as the \textit{Hadamard's multiplication theorem}.
\par

Now we explore the Lerch transcendent function, which is defined in \eqref{lerch transcendent equation}. We generalize the original series given in \eqref{lerch transcendent equation} by introducing an additional parameter $\lambda$ with the help of the fundamental integral form of the Lerch transcendent function, given by
\begin{align}\label{Lerch transcendent function integral form equation}
	\sum_{n=0}^{\infty} \frac{z^n}{(n+a)^s} = \frac{1}{a^s} + \frac{z}{\Gamma(s)} \int_{0}^{1} \left(\log\frac{1}{t}\right)^{s-1} \frac{t^a}{1-zt} \, dt, \quad |z| < 1,
\end{align}
with $a > 0, \, s \in \mathbb{C}$ such that $\Re(s) > 0$.

\begin{theorem}
Let $\Psi_{\lambda}(z, s, a)$ denote the generalization of the Lerch transcendent function. For $\lambda < \frac{1}{2}$ and $a, s \in \mathbb{C}$ with $\Re(a) > 0$ and $\Re(s) > 0$, the series representation is given by
\begin{align}\label{series representation of hurwitz zeta}
	\Psi_{\lambda}(z, s, a) = \sum_{n=0}^{\infty} \frac{1}{(1-\lambda)^{n+1}} \sum_{k=0}^{n} \binom{n}{k} (-\lambda)^{n-k} \frac{z^{k}}{(k+a)^{s}}, \quad |z| < 1.
\end{align}
\end{theorem}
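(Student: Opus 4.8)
The plan is to obtain the claimed expansion directly from the fundamental integral representation \eqref{Lerch transcendent function integral form equation} of the Lerch transcendent by re-expanding its kernel in a $\lambda$-dependent way. Writing \eqref{Lerch transcendent function integral form equation} in the equivalent consolidated form
\begin{equation*}
	\Psi(z,s,a)=\frac{1}{\Gamma(s)}\int_0^1\left(\log\frac1t\right)^{s-1}\frac{t^{a-1}}{1-zt}\,dt,
\end{equation*}
(the term $1/a^{s}$ being the $k=0$ contribution absorbed into the integrand), I would introduce $\lambda$ through the identity
\begin{equation*}
	\frac{1}{1-zt}=\frac{1}{(1-\lambda)-(zt-\lambda)}=\sum_{n=0}^{\infty}\frac{(zt-\lambda)^{n}}{(1-\lambda)^{n+1}},
\end{equation*}
which is simply a geometric series centred at $\lambda$ rather than at $0$. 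This is the step where the parameter $\lambda$ enters, and $\Psi_{\lambda}(z,s,a)$ is the function defined by substituting this expansion into the integral.

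First I would expand $(zt-\lambda)^{n}$ by the binomial theorem as $\sum_{k=0}^{n}\binom{n}{k}(-\lambda)^{n-k}z^{k}t^{k}$, interchange the order of summation and integration, and evaluate the resulting elementary integrals by means of the substitution $u=\log(1/t)$, which gives
\begin{equation*}
	\frac{1}{\Gamma(s)}\int_0^1\left(\log\frac1t\right)^{s-1}t^{a+k-1}\,dt=\frac{1}{(k+a)^{s}}.
\end{equation*}
Collecting the factors $(1-\lambda)^{-(n+1)}$, $\binom{n}{k}(-\lambda)^{n-k}z^{k}$ and $(k+a)^{-s}$ then reproduces exactly the double sum \eqref{series representation of hurwitz zeta}. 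As a consistency check, and as an independent route to the same conclusion, I would reverse the order of summation: summing over $n$ first with $j=n-k$ and using the negative binomial series $\sum_{j\ge0}\binom{j+k}{k}x^{j}=(1-x)^{-(k+1)}$ with $x=-\lambda/(1-\lambda)$ yields $\sum_{n\ge k}\binom{n}{k}(-\lambda)^{n-k}(1-\lambda)^{-(n+1)}=1$, so that the whole expression collapses to $\sum_{k\ge0}z^{k}/(k+a)^{s}=\Psi(z,s,a)$, confirming both the value and the well-definedness of the series.

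The main obstacle is the rigorous justification of the term-by-term integration (equivalently, of the rearrangement of the double series), and this is precisely where the hypothesis $\lambda<\tfrac12$ is used. The geometric expansion of the kernel converges provided $|zt-\lambda|<|1-\lambda|$, while summing in $n$ requires $|\lambda/(1-\lambda)|<1$; for real $\lambda$ and $|z|<1$ both reduce to $\lambda<\tfrac12$. For real $z\in(0,1)$ one checks that $zt-\lambda$ stays in $[-\lambda,z-\lambda]$, so that $|zt-\lambda|\le\max(\lambda,z-\lambda)<1-\lambda$ uniformly on $[0,1]$, which legitimises the interchange by uniform (or dominated) convergence. For complex $z$ the kernel expansion need not converge uniformly near $t=1$, so I would instead argue through the reversed summation order above—where the inner series in $n$ converges absolutely exactly when $\lambda<\tfrac12$—and extend to the full disc $|z|<1$ by analytic continuation in $z$. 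Verifying these convergence conditions carefully, rather than the algebra itself, is the delicate part of the argument.
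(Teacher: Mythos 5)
Your proof is correct and is essentially the paper's own argument: expanding $\frac{1}{1-zt}$ as a geometric series centred at $\lambda$, applying the binomial theorem, and integrating term by term against $\frac{1}{\Gamma(s)}\left(\log\frac1t\right)^{s-1}t^{a-1}$ (after absorbing the $1/a^{s}$ term into the integrand, exactly as you do) is precisely the $\lambda$-method of Alzer--Richards that the paper invokes by citation. Your reversed-summation consistency check and the convergence discussion pinpointing where $\lambda<\tfrac12$ enters simply fill in details the paper delegates to that reference.
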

	\begin{proof}
		Equation \eqref{Lerch transcendent function integral form equation}, can also be written as;
			\begin{align}\label{Lerch transcendent function integral without constant term}
			\sum_{n=0}^{\infty} \frac{z^n}{(n+a)^s} = \frac{1}{\Gamma(s)} \underbrace{\int_{0}^{1} \left(\log\frac{1}{t}\right)^{s-1} \frac{t^{a-1}}{1-zt}  dt}_{I}, \quad |z| < 1,
		\end{align}
		By applying the $\lambda$-method given in \cite{Alzer_2016_Series representations for special functions and mathematical constants}, we first obtain the series representation of the integral $I$. By substituting this series representation into \eqref{Lerch transcendent function integral without constant term}, we arrive at the final expression given in \eqref{series representation of hurwitz zeta}.
	\end{proof}
	\begin{remark}
		By setting $\lambda = 0$ in \eqref{series representation of hurwitz zeta}, we obtain the classical form of the series presented in \eqref{lerch transcendent equation}.
	\end{remark}


\section{Generalized polylogarithm as a single integral}\label{Generalized polylogarithm as a single integral}

In this section, we derive the result concerning the integral representation of the generalized polylogarithm $\Phi_{p, q}(a, b; z)$. The simplicity of this representation lies in its dependence on a single integral. 



\begin{theorem}\label{integral representation single integral theorem}
Suppose $a$ and $b$ are real numbers with $a, b >1$, and let $p, q \in \mathbb{C}$ with $\Re(p) > 0, \Re(q) > 0$. If $ p $ and $ q$ lie in the same quadrant, then the generalized polylogarithm $ \Phi_{p, q}(a, b; z)$, as defined in \eqref{generalized polylogarithm equation}, satisfies the following

\begin{align}\label{integral representation single integral equation}
	\Phi_{p, q}(a, b; z) &=  (1 + a)^p (1 + b)^q \left( \frac{z}{1 - z} + \frac{pz}{z-1} \int_0^\infty e^{-p y} z^{j(y)}  dy \right),\quad |z| < 1,
\end{align}
where $\Delta(y)$ is an invertible function given as
\begin{align}\label{value of j(y) and delta}
	j(y) = \lfloor \Delta^{-1}(e^y) \rfloor, \quad \Delta(y) = (y+ a)(y+ b)^{q/p},
\end{align}
and $\lfloor \cdot \rfloor$ denotes the greatest integer function.
\end{theorem}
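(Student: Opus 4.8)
The plan is to recognize the normalized defining series as a Dirichlet series and then invoke the Laplace representation \eqref{Laplace representation of dirichlet series}. First I would factor out the normalizing constant $(1+a)^p(1+b)^q$ and concentrate on the reduced series $g(p)=\sum_{k=1}^{\infty} z^{k}/\bigl[(k+a)^{p}(k+b)^{q}\bigr]$. The key algebraic observation is that the denominator is a pure $p$-th power of the function $\Delta$ appearing in \eqref{value of j(y) and delta}: since $\Delta(y)=(y+a)(y+b)^{q/p}$, we have $(k+a)^{p}(k+b)^{q}=\Delta(k)^{p}=e^{p\log\Delta(k)}$, and therefore
\begin{align*}
g(p)=\sum_{k=1}^{\infty} z^{k}\,e^{-p\log\Delta(k)}.
\end{align*}
This is precisely a Dirichlet series of the form \eqref{Dirichlet series} with Dirichlet variable $\xi=p$, coefficients $\alpha_{k}=z^{k}$, and exponents $\beta_{k}=\log\Delta(k)$.

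Second, I would verify the structural hypotheses required by \eqref{Laplace representation of dirichlet series}: that $\beta_{k}$ is a real, strictly increasing sequence diverging to $+\infty$, and that $\beta(y)=\log\Delta(y)$ admits a well-defined increasing inverse with $\beta^{-1}(t)=\Delta^{-1}(e^{t})$. This is exactly where the hypotheses $a,b>1$ together with the requirement that $p$ and $q$ lie in the same quadrant enter, since they are what guarantee that $\Delta$ is monotonically increasing and invertible on $\mathbb{R}^{+}$ (the invertibility of $\Delta$ is recorded in \eqref{value of j(y) and delta}). I expect this verification --- pinning down precisely why the same-quadrant condition forces the real, increasing, divergent behaviour of $\beta_{k}$ so that $\Delta^{-1}(e^{t})$ is meaningful --- to be the main technical obstacle, as the applicability of the entire Dirichlet/Laplace apparatus rests upon it.

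Third, with these hypotheses in hand, I would apply \eqref{Laplace representation of dirichlet series} with $\xi=p$, writing $j(t)=\lfloor\Delta^{-1}(e^{t})\rfloor$ as in \eqref{value of j(y) and delta}, to obtain
\begin{align*}
g(p)=p\int_{0}^{\infty} e^{-pt}\sum_{k=1}^{j(t)} z^{k}\,dt.
\end{align*}
I would then evaluate the inner finite geometric sum, namely $\sum_{k=1}^{j(t)} z^{k}=\tfrac{z}{1-z}-\tfrac{z\,z^{j(t)}}{1-z}$, valid for $j(t)\ge 0$ and reducing to $0$ when $j(t)=0$, and split the resulting integral into two pieces.

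Finally, using $\int_{0}^{\infty} e^{-pt}\,dt=1/p$ for $\Re(p)>0$, the first piece contributes $\tfrac{z}{1-z}$, while the second contributes $-\tfrac{pz}{1-z}\int_{0}^{\infty} e^{-pt}z^{j(t)}\,dt=\tfrac{pz}{z-1}\int_{0}^{\infty} e^{-pt}z^{j(t)}\,dt$. Reinstating the normalizing factor $(1+a)^{p}(1+b)^{q}$ and renaming the integration variable $t$ to $y$ then yields exactly \eqref{integral representation single integral equation}. The only remaining care is to justify the interchange of summation and integration and the convergence of the integral for $|z|<1$ and $\Re(p)>0$, which follows from the absolute convergence of the defining series \eqref{generalized polylogarithm equation} in that range.
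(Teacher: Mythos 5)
Your proposal is correct and follows essentially the same route as the paper: it recognizes the normalized series as a Dirichlet series with $\alpha_k=z^k$, $\beta_k=\log\Delta(k)$, $\xi=p$, verifies monotonicity and invertibility of $\Delta$ from the same-quadrant and $a,b>1$ hypotheses, applies the Laplace representation \eqref{Laplace representation of dirichlet series}, and finishes by summing the geometric series and integrating. The only difference is cosmetic --- you split the integral and evaluate $\int_0^\infty e^{-pt}\,dt=1/p$ explicitly, where the paper compresses this into ``completing the integration.''
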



\begin{proof}
	The generalized polylogarithm $\Phi_{p, q}(a,b;z)$, defined in \eqref{generalized polylogarithm equation}, can be expressed as
	\begin{align}\label{generalized polylog as dirichlet series}
		\Phi_{p, q}(a,b;z) &= (1+a)^p (1+b)^q \sum_{n=1}^\infty z^n e^{-p \log \left( (n + a)(n + b)^{q/p} \right)},
	\end{align}
	which turns out to be a Dirichlet series \eqref{Dirichlet series} with $\alpha_n = z^n$, $\beta_n = \log \left( (n + a)(n + b)^{q/p} \right)$, and $\xi = p$.
	
	Let $\beta(x) = \log \left( (x + a)(x + b)^{q/p} \right)$ for $x \in (0, \infty)$, which gives
	\begin{align*}
		e^{\beta(x)} = (x + a)(x + b)^{q/p}, \quad x \in (0, \infty).
	\end{align*}
	Differentiating with respect to $x$, we obtain
	\begin{align*}
		e^{\beta(x)} \beta^{'}(x) = \frac{q}{p}(x + a)(x + b)^{(q/p)-1} + (x + b)^{q/p}, \quad x \in (0, \infty).
	\end{align*}
	The condition of $p,q$  lying in the same quadrant implies that $\frac{q}{p} \in (0, \infty)$ and we also have $a, b > 1$, it follows that $e^{\beta(x)} \beta'(x)$ is always positive. Moreover, since $e^{\beta(x)}$ is always positive, $\beta'(x)$ must also be positive, which implies that $\beta(x)$ is a strictly increasing function. Consequently, $\beta(x)$ and $e^{\beta(x)}$ are invertible, as any continuous and monotone function has an inverse.
	
	Thus, by using the Laplace representation \eqref{Laplace representation of dirichlet series}, we can express the generalized polylogarithm \eqref{generalized polylog as dirichlet series} as follows
	\begin{align}\label{implicit form of n}
\nonumber		\Phi_{p, q}(a,b;z) &= p (1+a)^p (1+b)^q \int_0^\infty e^{-p y} \sum_{n:\beta_n \leq y} z^n \, dy, \\
		&= p (1+a)^p (1+b)^p \int_0^\infty e^{-p y} \sum_{n: (n + a)(n + b)^{q/p} \leq e^{y}} z^n \, dy.
	\end{align}
	Solving the inequality constraint $(n + a)(n + b)^{q/p} \leq e^{y}$ leads to
	\begin{align}\label{sum from n to j(y)}
	\Phi_{p, q}(a,b;z)= p (1+a)^p (1+b)^q \int_0^\infty e^{-p y} \sum_{n=1}^{j(y)} z^n \, dy,
	\end{align}
	where $j(y)$ is given in \eqref{value of j(y) and delta}.
	
	By summing the given geometric series in \eqref{sum from n to j(y)}, we obtain
	\begin{align*}
		\Phi_{p, q}(a,b;z) = p (1+a)^p (1+b)^q \int_0^\infty e^{-p y} \left( \frac{z}{1-z} \left( 1 - z^{j(y)} \right) \right) \, dy.
	\end{align*}
After completing the integration, the desired result is obtained, as shown in \eqref{integral representation single integral equation}.
\end{proof}


\begin{remark}
In \eqref{implicit form of n}, the summation starts at $1$ and continues up to a value $n$ that satisfies the inequality $(n + a)(n + b)^{q/p} \leq e^{y}$. Due to the presence of the factor $\frac{q}{p}$, the explicit range of summation cannot be determined directly, and is therefore expressed in terms of $j(y)$, where $j(y)$ is the inverse of the function $\Delta(y)$ defined in \eqref{value of j(y) and delta}. However, for certain values of $p$ and $q$, the explicit form of $j(y)$ can be determined, as shown below.
\end{remark}

\begin{corollary}
For $p = q$, the integral representation \eqref{integral representation single integral equation} becomes
\begin{align}\label{integral representation single integral equation for p=q}
	\Phi_{p, p}(a, b; z) &= (1 + a)^p (1 + b)^p \left( \frac{z}{1 - z} + \frac{pz}{z-1} \int_0^\infty e^{-p y} z^{n(y)}  dy \right), \quad |z|<1,
\end{align}
where $n(y)$ is as follows
\begin{align}\label{value of n(y)}
	n(y) = \Bigl\lfloor \frac{ -(a + b) + \sqrt{(a - b)^2 + 4 e^y}}{2}  \Bigr\rfloor.
\end{align}
\end{corollary}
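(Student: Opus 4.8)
The plan is to specialize Theorem \ref{integral representation single integral theorem} to the case $p = q$ and then invert the function $\Delta$ explicitly. First I would observe that the hypotheses of the theorem are automatically met when $p = q$: since $p$ and $q$ are then trivially in the same quadrant, the ratio $q/p = 1$ lies in $(0, \infty)$, and together with $a, b > 1$ this guarantees (as established in the proof of the theorem) that $\Delta$ is strictly increasing and hence invertible on $(0, \infty)$. Therefore the integral representation \eqref{integral representation single integral equation} holds verbatim, and it remains only to compute $j(y) = \lfloor \Delta^{-1}(e^y) \rfloor$ in closed form.

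Substituting $q/p = 1$ into \eqref{value of j(y) and delta} collapses $\Delta$ to the quadratic
\begin{align*}
	\Delta(x) = (x + a)(x + b) = x^2 + (a+b)x + ab.
\end{align*}
To find $\Delta^{-1}(e^y)$, I would solve $\Delta(x) = e^y$, that is $x^2 + (a+b)x + (ab - e^y) = 0$. Applying the quadratic formula and simplifying the discriminant via $(a+b)^2 - 4(ab - e^y) = (a-b)^2 + 4 e^y$ yields the two roots
\begin{align*}
	x = \frac{-(a+b) \pm \sqrt{(a-b)^2 + 4 e^y}}{2}.
\end{align*}

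The only genuine decision point—and the main thing to justify carefully—is the choice of sign. Since $\Delta^{-1}(e^y)$ must be the positive preimage (the summation index $n$ ranges over positive integers), I would discard the root with the minus sign: for $y > 0$ one has $e^y > 1$, so $\sqrt{(a-b)^2 + 4 e^y} < a + b$ would force the minus-root to be negative, whereas the plus-root is positive and is precisely the branch lying in the domain of $\Delta$. Taking the greatest integer part of this positive root then gives
\begin{align*}
	n(y) = \Bigl\lfloor \frac{-(a+b) + \sqrt{(a-b)^2 + 4 e^y}}{2} \Bigr\rfloor,
\end{align*}
which is exactly \eqref{value of n(y)}. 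Inserting this expression for $j(y)$ into \eqref{integral representation single integral equation} completes the derivation of \eqref{integral representation single integral equation for p=q}.
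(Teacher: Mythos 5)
Your proposal is correct and follows essentially the same route as the paper: the paper sets $p=q$ in the intermediate Laplace representation and solves the inequality $(n+a)(n+b)\le e^{y}$ via the same quadratic formula and discriminant simplification $(a+b)^2-4(ab-e^{y})=(a-b)^2+4e^{y}$, keeping the positive root and taking its floor, which is exactly your explicit inversion of $\Delta$. One cosmetic slip in your sign discussion: the minus-root $\bigl(-(a+b)-\sqrt{(a-b)^2+4e^{y}}\bigr)/2$ is negative simply because both terms in its numerator are negative, not because of the comparison $\sqrt{(a-b)^2+4e^{y}}<a+b$; your conclusion is unaffected.
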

\begin{proof}
Substituting $p = q$ into \eqref{implicit form of n}, we obtain
\begin{align}\label{implicit form of n for p=q}
\Phi_{p, p}(a, b; z) = p (1+a)^p (1+b)^p \int_0^\infty e^{-p y} \sum_{n: (n + a)(n + b) \leq e^{y}} z^n \, dy.
\end{align}
To determine the value of $n$, we solve the inequality $(n + a)(n + b) \leq e^{y}$. Rewriting it, we have
\begin{align*}
(n + a)(n + b) - e^{y} \leq 0.
\end{align*}
Define the function $f(x)$ as
\begin{align*}
f(x) = (x + a)(x + b) - e^{y}, \quad x \in (0, \infty).
\end{align*}
It is evident that $ f(x)$ is an increasing function with zeros at
\begin{align*}
x_1 = \frac{ -(a + b) - \sqrt{(a - b)^2 + 4 e^y}}{2}, \quad
x_2 = \frac{ -(a + b) + \sqrt{(a - b)^2 + 4 e^y}}{2}.
\end{align*}
Now, the inequality $f(x) \leq 0 $ holds only if
\begin{align*}
0 \leq x \leq \frac{ -(a + b) + \sqrt{(a - b)^2 + 4 e^y}}{2}.
\end{align*}
Restricting $x $ to natural numbers, we get
\begin{align*}
	1 \leq n \leq \Bigl \lfloor \frac{ -(a + b) + \sqrt{(a - b)^2 + 4 e^y}}{2}\Bigr \rfloor.
\end{align*}
Thus, \eqref{implicit form of n for p=q} becomes
\begin{align*}
\Phi_{p, p}(a, b; z) = p (1+a)^p (1+b)^p \int_0^\infty e^{-p y} \sum_{n=1}^{n(y)} z^n \, dy,
\end{align*}
where $n(y)$ is defined as in \eqref{value of n(y)}.

By summing the geometric series and carrying out the necessary calculations, we obtain the desired result, as shown in \eqref{integral representation single integral equation for p=q}.
\end{proof}

Another form of \eqref{integral representation single integral equation for p=q} can be obtained by substituting $e^y = t$ into \eqref{integral representation single integral equation}, as shown below
\begin{align*}
	\Phi_{p, p}(a, b; z) = (1+a)^p (1+b)^p \left(\frac{z}{1-z} + \frac{p z}{z-1} \int_1^\infty z^{\Bigl \lfloor \frac{-(a+b) + \sqrt{(a-b)^2 + 4 t}}{2} \Bigr \rfloor} \frac{dt}{t^{p+1}} \right).
\end{align*}
Now, we consider the case where two conditions are imposed simultaneously: $a = b$ and $p = q$. Under these constraints, the generalized polylogarithm reduces to the Lerch transcendent function $\psi(z, p, a)$, as defined in \eqref{lerch transcendent equation}. Consequently, under these same conditions, the integral representation of the generalized polylogarithm $\Phi_{p, q}(a, b; z)$ also reduces to the integral form of the Lerch transcendent function, as shown in the following corollary.

\begin{corollary}\label{integral form of Lerch transcendent function corollary}
For $a = b$ and $p = q$, the integral representation of $\Phi_{p, q}(a, b; z)$ given in \eqref{integral representation single integral equation} simplifies to the following form
\begin{align}\label{Lerch transcendent function integral form with infinite limit}
\sum_{n=0}^\infty \frac{z^n}{{(n+a)}^p} = \frac{1}{a^p} + \frac{z}{1-z} + \frac{pz}{z-1} \int_0^\infty e^{-p\tau} z^{\lfloor e^\tau - a \rfloor}  d\tau, \quad |z|<1.
\end{align}
\end{corollary}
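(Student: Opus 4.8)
The plan is to specialize the general integral representation \eqref{integral representation single integral equation} to the case $a=b$, $p=q$, and then to recognize the resulting object as the Lerch transcendent via the reduction noted just after \eqref{lerch transcendent equation}.

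First I would set $q=p$ and $b=a$, so that the auxiliary function in \eqref{value of j(y) and delta} collapses to $\Delta(y)=(y+a)(y+b)^{q/p}=(y+a)^2$. On $(0,\infty)$ this is strictly increasing with inverse $\Delta^{-1}(t)=\sqrt{t}-a$, whence $j(y)=\lfloor\Delta^{-1}(e^y)\rfloor=\lfloor e^{y/2}-a\rfloor$. Substituting these, together with the collapsed prefactor $(1+a)^p(1+b)^q=(1+a)^{2p}$, into \eqref{integral representation single integral equation} gives
\[
\Phi_{p,p}(a,a;z)=(1+a)^{2p}\left(\frac{z}{1-z}+\frac{pz}{z-1}\int_0^\infty e^{-py}\,z^{\lfloor e^{y/2}-a\rfloor}\,dy\right).
\]

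Next I would invoke the special case recorded after \eqref{lerch transcendent equation}: for $a=b$ and $p+q=s$ one has $\Phi_{p,q}(a,b;z)=(1+a)^s\sum_{k=1}^\infty z^k/(k+a)^s$. Here $s=2p$, so dividing the displayed identity by $(1+a)^{2p}$ removes the prefactor and exposes the tail $\sum_{k\ge1}z^k/(k+a)^{2p}$ on the left. Adding the missing $k=0$ term $a^{-2p}$ then completes this to the full Lerch series $\sum_{n\ge0}z^n/(n+a)^{2p}$ and supplies the $1/a^{2p}$ appearing in \eqref{Lerch transcendent function integral form with infinite limit}.

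The remaining step, and the only place requiring care, is to cast the integral into the stated shape. I would apply the change of variable $\tau=y/2$, so that $e^{y/2}=e^\tau$, $e^{-py}=e^{-2p\tau}$, and $dy=2\,d\tau$; the extra factor $2$ combines with $pz/(z-1)$ to produce $(2p)z/(z-1)$, while $e^{-2p\tau}$ carries the exponent $2p$ throughout. Writing $s=2p$ and relabelling $s$ as $p$ then reproduces \eqref{Lerch transcendent function integral form with infinite limit} exactly. I expect the main obstacle to be purely bookkeeping: one must keep track of the fact that the exponent in the final formula is $s=2p$ rather than the original Laplace parameter $p=\xi$, and verify that the substitution $\tau=y/2$ delivers precisely the compensating factor of $2$ needed to upgrade the coefficient $pz/(z-1)$ to $sz/(z-1)$ and the kernel $e^{-py}z^{\lfloor e^{y/2}-a\rfloor}$ to $e^{-s\tau}z^{\lfloor e^\tau-a\rfloor}$.
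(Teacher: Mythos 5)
Your proposal is correct and follows essentially the same route as the paper: specialize \eqref{integral representation single integral equation} to $a=b$, $p=q$ (where $\Delta(y)=(y+a)^2$ gives $j(y)=\lfloor e^{y/2}-a\rfloor$), strip the prefactor $(1+a)^{2p}$, add the $n=0$ term $a^{-2p}$, and convert the exponent $2p$ into $p$ via the substitution $\tau=y/2$ together with a relabelling. The only difference is cosmetic ordering—the paper relabels $2p\mapsto p$ before substituting $\tau=y/2$, while you substitute first and relabel $s=2p$ afterwards—and your bookkeeping of the compensating factor $2$ is exactly right.
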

\begin{proof}
Putting $a = b$ in \eqref{integral representation single integral equation for p=q}, we get
\begin{align}\label{integral form of generalized polylog for a=b and p=q}
	\Phi_{p, p}(a, a; z) = (1 + a)^{2p}  \left( \frac{z}{1 - z} + \frac{pz}{z-1} \int_0^\infty e^{-p y} z^{\lfloor e^{y/2}-a \rfloor}  dy \right),
\end{align}
From \eqref{generalized polylogarithm equation}, by substituting the value of $\Phi_{p, p}(a, a; z)$ into \eqref{integral form of generalized polylog for a=b and p=q}, we get

\begin{align*}
	\sum_{n=1}^{\infty} \frac{z^n}{(n+a)^{2p}}&= \frac{z}{1 - z} + \frac{pz}{z-1} \int_0^\infty e^{-p y} z^{\lfloor e^{y/2}-a \rfloor}  dy.\\
\implies	\sum_{n=1}^{\infty} \frac{z^n}{(n+a)^{p}}&= \frac{z}{1 - z} + \frac{pz}{2(z-1)} \int_0^\infty e^{-p y/2} z^{\lfloor e^{y/2}-a \rfloor}  dy.
\end{align*}
Adding $ \frac{1}{a^p}$ to both sides and substituting $ \frac{y}{2} = \tau $, we obtain \eqref{Lerch transcendent function integral form with infinite limit}.
\end{proof}

We have obtained the integral form of the generalized polylogarithm as well as the Lerch transcendent function. One common feature in both integral forms is that they both involve integrals that resemble the Laplace transform of some measure. This brings us to the motivation of discussing the complete monotonicity, as Bernstein's theorem states that a completely monotone function is the Laplace transform of a unique measure. In the next result, we will discuss the complete monotonicity of the function involving the generalized polylogarithm.
\begin{corollary}\label{completely monotonic function involving generalized polylogarithm corollary}
Let $ p, q $ be positive real numbers, and $ a, b > 1 $. Also, let $ x \in (0, 1) $ be a real number. Define the function
\begin{align}\label{value of psi(p)}
	\psi(p) = \left( \frac{\Phi_{p, q}(a, b; x)}{(1 + a)^p (1 + b)^q} - \frac{z}{1 - z} \right) \frac{x - 1}{p x},
\end{align}
where $\Phi_{p, q}(a, b; x)$ is the generalized polylogarithm defined in \eqref{generalized polylogarithm equation}. Then, the function $\psi(p)$ is completely monotonic with respect to $p$ on $(0, \infty)$.
\end{corollary}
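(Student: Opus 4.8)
The plan is to recast $\psi(p)$ as a genuine Laplace transform in $p$ with a nonnegative density that does \emph{not} depend on $p$, and then invoke the converse direction of Bernstein's theorem (Theorem \ref{Bernstein theorem}). First I would unwind the definition \eqref{value of psi(p)} using the series \eqref{generalized polylogarithm equation} (reading the constant term as $x/(1-x)$). Writing
\[
S(p) = \sum_{n=1}^{\infty}\frac{x^{n}}{(n+a)^{p}(n+b)^{q}} = \frac{\Phi_{p,q}(a,b;x)}{(1+a)^{p}(1+b)^{q}},
\]
and simplifying the prefactor $\tfrac{x-1}{px}$ against the term $\tfrac{x}{1-x}$, the definition collapses to the clean form
\[
\psi(p) = \frac{1}{p} - \frac{1-x}{x}\cdot\frac{S(p)}{p}.
\]

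The key step is to realize both pieces as Laplace transforms against a common weight. I would use $\tfrac{1}{p} = \int_{0}^{\infty} e^{-py}\,dy$ and, for each $n\ge 1$, the identity $\tfrac{1}{p}(n+a)^{-p} = \int_{\log(n+a)}^{\infty} e^{-py}\,dy$, which is valid because $n+a>1$ forces $\log(n+a)>0$. Substituting the latter into $S(p)/p$ and interchanging summation with integration (legitimate by Tonelli, all terms being positive) yields
\[
\frac{S(p)}{p} = \int_{0}^{\infty} e^{-py}\,G(y)\,dy,\qquad G(y) = \sum_{n=1}^{\lfloor e^{y}-a\rfloor}\frac{x^{n}}{(n+b)^{q}}\ \ge 0,
\]
so that $\psi(p) = \int_{0}^{\infty} e^{-py}\,h(y)\,dy$ with $h(y) = 1 - \tfrac{1-x}{x}G(y)$, a function free of $p$.

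The main obstacle is precisely this reformulation. The integral representation coming straight from Theorem \ref{integral representation single integral theorem}, namely $\psi(p) = \int_{0}^{\infty} e^{-py}x^{j(y)}\,dy$, is \emph{not} directly amenable to Bernstein's theorem, since $j(y)$ depends on $p$ through the exponent $q/p$ appearing in $\Delta$ in \eqref{value of j(y) and delta}; thus $x^{j(y)}$ is not a fixed density. Separating off the $(n+a)^{-p}$ factors as above is what removes this $p$-dependence and produces a bona fide Laplace kernel, and this is where the argument must really be earned. Once $h$ is $p$-free, it remains only to check $h\ge 0$.

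That positivity is a short estimate. Since $a,b>1$ and $q>0$, every factor $(n+b)^{q}>1$, whence $x^{n}/(n+b)^{q} < x^{n}$, and therefore $G(y) < \sum_{n=1}^{\infty} x^{n} = \tfrac{x}{1-x}$ for every $y>0$. Consequently $\tfrac{1-x}{x}G(y) < 1$, so $h(y) > 0$ on $(0,\infty)$. As $h$ is bounded, the Laplace integral converges for all $p>0$; the converse part of Bernstein's theorem (Theorem \ref{Bernstein theorem}) then gives at once that $\psi$ is completely monotonic on $(0,\infty)$, completing the proof.
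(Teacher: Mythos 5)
Your proof is correct, and it takes a genuinely different --- and in fact more careful --- route than the paper's. The paper proves the corollary in one step: it reads off from Theorem \ref{integral representation single integral theorem} the identity $\psi(p)=\int_0^\infty e^{-pt}x^{j(t)}\,dt$ and invokes Bernstein's theorem on the grounds that $x^{j(t)}\geq 0$. As you observe, this glosses over the fact that $j(t)=\lfloor\Delta^{-1}(e^t)\rfloor$ with $\Delta(t)=(t+a)(t+b)^{q/p}$ depends on $p$, so the paper exhibits, for each fixed $p$, a different nonnegative density rather than one fixed measure whose Laplace transform is $\psi$; a $p$-dependent representation of this kind proves nothing by itself (every positive function $f$ trivially satisfies $f(p)=\int_0^\infty e^{-py}\,p f(p)\,dy$). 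Your decomposition removes exactly this defect: by keeping the factor $(n+b)^{-q}$ in the Dirichlet coefficients and realizing only $(n+a)^{-p}/p=\int_{\log(n+a)}^\infty e^{-py}\,dy$, you arrive at
\[
\psi(p)=\int_0^\infty e^{-py}\,h(y)\,dy, \qquad h(y)=1-\frac{1-x}{x}\sum_{n=1}^{\lfloor e^y-a\rfloor}\frac{x^n}{(n+b)^q},
\]
with $h$ independent of $p$, and your estimate $(n+b)^q>1$ (valid since $b>1$ and $q>0$) gives $0<h\leq 1$, so the converse half of Theorem \ref{Bernstein theorem} genuinely applies. In effect you have rerun Cahen's formula \eqref{Laplace representation of dirichlet series} on the same series but with the $p$-free type $\beta_n=\log(n+a)$ in place of the paper's $p$-dependent type $\log\bigl((n+a)(n+b)^{q/p}\bigr)$. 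What each approach buys: the paper's version is shorter and reuses Theorem \ref{integral representation single integral theorem} directly, but as written it contains the gap you identified; yours supplies the fixed representing measure that complete monotonicity actually requires, and as by-products yields $\psi(p)\leq 1/p$ and the nonnegativity later exploited for the bounds \eqref{bound for generalized polylogarithm}. Two small matters are both handled correctly on your side: you read the typo $\frac{z}{1-z}$ in \eqref{value of psi(p)} as $\frac{x}{1-x}$, consistent with the paper's own proof of the corollary, and your sum--integral interchange is justified by Tonelli since all terms are nonnegative.
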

\begin{proof}
From \eqref{integral representation single integral equation}, we observe that
\begin{align}\label{psi(p) in terms of integral}
	\int_0^\infty e^{-py} x^{j(y)} dy = \left( \frac{\Phi_{p, q}(a, b; x)}{(1 + a)^p (1 + b)^q} - \frac{x}{1 - x} \right) \frac{x - 1}{p x}
\end{align}
where $j(y)$ is the same as in \eqref{value of j(y) and delta}.
By comparing \eqref{psi(p) in terms of integral} and \eqref{value of psi(p)}, we get
\begin{align}\label{psi(p) integral form}
\psi(p)	= \int_0^\infty e^{-pt} x^{j(t)}  dt.
\end{align}
Thus, proving the complete monotonicity of $\psi(p)$ is equivalent to demonstrating the complete monotonicity of the integral given in \eqref{psi(p) integral form}.
Since the integral \eqref{psi(p) integral form} is a Laplace transform of a positive measure as $j(y)$ is always positive, Bernstein's theorem (see Theorem \ref{Bernstein theorem}) ensures its complete monotonicity with respect to $p$, implying that $\psi(p)$ is also completely monotonic.
\end{proof}
The complete monotonicity of the function $\psi(p)$ can also be seen graphically, as illustrated below.
\begin{figure}[H]
	\footnotesize
	\stackunder[5pt]{\includegraphics[scale=0.9]{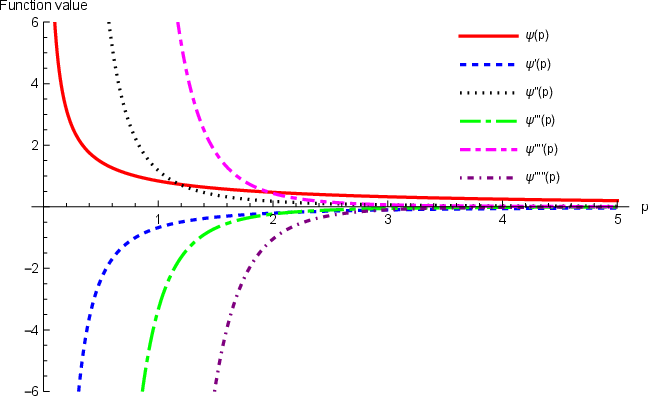}}{Graph of $\psi^{(n)}(p), n=0,1,2,3,4,5$ for $a=1.4$, $b=1.2$, $q=0.75$, $x=0.5$}
	\caption{Complete monotonicity of $\psi(p)$}
\label{fig:Complete monotonicity of psi function}
\end{figure}

The notation $\psi^{(n)}(p)$, as used in the footnote of Figure \ref{fig:Complete monotonicity of psi function}, denotes the $n$-th derivative of $\psi(p)$ with respect to $p$, where $n$ is a non-negative integer. From the graph, it is evident that $\psi(p)$ meets the criteria for complete monotonicity as outlined in Definition \ref{complete monotone function definition}. In this graphical representation, the derivatives of $\psi(p)$ are computed up to the fifth order, exhibiting an alternating sign pattern starting with a positive sign. However, the procedure can be extended to higher orders in a similar manner.

\begin{remark}
The function $\psi(p)$, as defined in \eqref{value of psi(p)}, is completely monotone on $(0, \infty)$. A significant subclass of completely monotone functions is the class of Stieltjes functions, leading to the question of whether $\psi(p)$ belongs to this subclass. We found that the function $\psi(p)$ is completely monotone, but it is not a Stieltjes function because the density function present in the integral given in \eqref{psi(p) integral form} is not completely monotone. It is not even differentiable and is only of bounded variation.
\end{remark}

Note that the complete monotonicity of $\psi(p)$ implies its non-negativity. By using this property, we can derive the bounds of the generalized polylogarithm, which are discussed in the following corollary.

\begin{corollary}
For positive real numbers $a, b, p$ and $q$, the bounds for the generalized polylogarithm $\Phi_{p, q}(a, b; x)$, as defined in \eqref{generalized polylogarithm equation}, are given by
\begin{align}\label{bound for generalized polylogarithm}
	x \leq \Phi_{p, q}(a, b; x) \leq (1 + a)^p (1 + b)^q \frac{x}{1 - x}, \quad x\in (0,1).
\end{align}
\end{corollary}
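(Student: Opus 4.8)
The plan is to prove the two inequalities separately, handling the lower bound directly from the defining series \eqref{generalized polylogarithm equation} and the upper bound from the non-negativity of the function $\psi(p)$ established in Corollary \ref{completely monotonic function involving generalized polylogarithm corollary}. No new machinery is needed; both halves follow from facts already in hand, and the work is almost entirely a matter of careful sign bookkeeping.

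For the lower bound $\Phi_{p,q}(a,b;x) \geq x$, I would go back to the series \eqref{generalized polylogarithm equation}. The $k=1$ term equals $\dfrac{(1+a)^p(1+b)^q}{(1+a)^p(1+b)^q}\,x = x$, while for $a,b,p,q>0$ and $x\in(0,1)$ every remaining term is strictly positive, so that
\[
\Phi_{p,q}(a,b;x) = x + \sum_{k=2}^\infty \frac{(1+a)^p(1+b)^q}{(k+a)^p(k+b)^q}\,x^k \geq x,
\]
which yields the left-hand inequality at once.

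For the upper bound I would invoke the complete monotonicity of $\psi(p)$ from Corollary \ref{completely monotonic function involving generalized polylogarithm corollary}, which in particular gives $\psi(p)\geq 0$; this non-negativity is in any case immediate from the integral form \eqref{psi(p) integral form}, whose integrand $e^{-pt}x^{j(t)}$ is positive. Recalling the definition
\[
\psi(p) = \left( \frac{\Phi_{p,q}(a,b;x)}{(1+a)^p(1+b)^q} - \frac{x}{1-x} \right) \frac{x-1}{px},
\]
the key point is that for $x\in(0,1)$ the factor $\dfrac{x-1}{px}$ is strictly negative. Hence $\psi(p)\geq 0$ forces the bracketed quantity to be non-positive, i.e. $\dfrac{\Phi_{p,q}(a,b;x)}{(1+a)^p(1+b)^q}\leq \dfrac{x}{1-x}$, which rearranges to $\Phi_{p,q}(a,b;x)\leq (1+a)^p(1+b)^q\,\dfrac{x}{1-x}$, the right-hand inequality.

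The argument is short, so the main obstacle is not analytic but a sign trap in the upper bound: because $\dfrac{x-1}{px}<0$ on $(0,1)$, multiplying through reverses the inequality, and overlooking this would invert the bound. A secondary subtlety is the hypothesis on $a,b$: the complete monotonicity in Corollary \ref{completely monotonic function involving generalized polylogarithm corollary} is stated for $a,b>1$, whereas the present statement assumes only $a,b>0$. Since all that the upper bound actually uses is $\psi(p)\geq 0$, and this is visible directly from the positivity of the integrand in \eqref{psi(p) integral form}, the weaker hypothesis $a,b>0$ suffices, so I would justify the non-negativity from the integral form rather than from the monotonicity result to keep the stated hypotheses intact.
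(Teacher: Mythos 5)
Your proposal is correct, and it is in places more careful than the paper's own proof, which is a single sentence asserting that the non-negativity of $\psi(p)$ (Corollary \ref{completely monotonic function involving generalized polylogarithm corollary}) implies both bounds. For the upper bound you argue exactly as the paper intends: $\psi(p)\ge 0$ together with the strict negativity of $\frac{x-1}{px}$ on $(0,1)$ forces $\Phi_{p,q}(a,b;x)\le (1+a)^p(1+b)^q\frac{x}{1-x}$, and you correctly flag the sign reversal (you also silently repair the typo in \eqref{value of psi(p)}, where $\frac{z}{1-z}$ should read $\frac{x}{1-x}$, as \eqref{psi(p) in terms of integral} confirms). For the lower bound you genuinely depart from the paper, and to your advantage: non-negativity of $\psi(p)$ alone does not visibly yield $\Phi_{p,q}(a,b;x)\ge x$ — one would need additional input, e.g.\ $j(t)\ge 1$ for all $t$, giving $\psi(p)\le x/p$ and hence $\Phi_{p,q}(a,b;x)\ge (1+a)^p(1+b)^q\, x\ge x$ — so your direct series argument (the $k=1$ term of \eqref{generalized polylogarithm equation} equals $x$ and every later term is positive) is a more self-contained route, and it transparently holds for all $a,b,p,q>0$, matching the corollary's stated hypotheses where the paper's own chain of references does not.

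One caveat concerning your fix of the hypothesis mismatch: you justify $\psi(p)\ge 0$ for all $a,b>0$ by appealing "directly" to the positivity of the integrand in \eqref{psi(p) integral form}, but that identity is itself a consequence of Theorem \ref{integral representation single integral theorem}, which is stated only for $a,b>1$; as written, your remedy quietly extends the theorem beyond its stated hypotheses (its proof does go through for $a,b>0$, since the monotonicity of $\beta$ only needs $a,b>0$ and $q/p>0$, but that should be said explicitly rather than assumed). If you want the corollary exactly as stated with no appeal to the integral representation, the cleanest repair is to prove the upper bound from the series as well: for $k\ge 1$ one has $(k+a)^p(k+b)^q\ge(1+a)^p(1+b)^q$, whence $\Phi_{p,q}(a,b;x)\le\sum_{k\ge 1}x^k=\frac{x}{1-x}\le(1+a)^p(1+b)^q\frac{x}{1-x}$, valid for all $a,b,p,q>0$ and in fact sharper than the stated bound.
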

\begin{proof}
By Corollary \ref{completely monotonic function involving generalized polylogarithm corollary}, the non-negativity of $\psi(p)$ with respect to $p$ implies the bounds stated in \eqref{bound for generalized polylogarithm}.
\end{proof}
We now address the graphical representation of the bounds in \eqref{bound for generalized polylogarithm}.
\begin{figure}[H]
	\footnotesize
	\stackunder[5pt]{\includegraphics[scale=1.0]{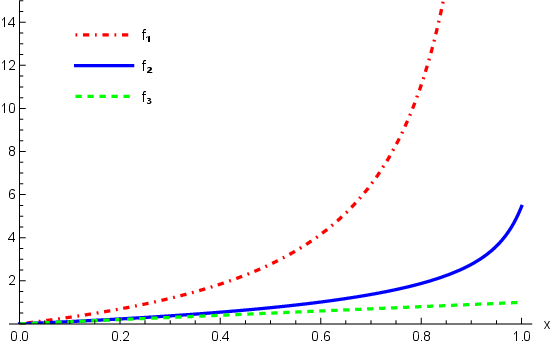}}{Graph of $f_1=(1 + a)^{p}(1 + b)^{q}\frac{x}{1 - x}$, $f_2=\Phi_{p, q}(a, b; x)$ and $f_3=x$ for $a=1.2, b=1.3, p=0.5, q=0.75$}
	\caption{Bounds for generalized polylogarithm $\Phi_{p, q}(a, b; x)$.}
	\label{fig:bounds for generalized polylogarithm}
\end{figure}

It is evident from the Figure \ref{fig:bounds for generalized polylogarithm} that the red (dot-dashed) curve, representing $f_1=(1 + a)^p (1 + b)^q \frac{x}{1 - x}$, provides an upper bound for the blue (thick) curve, which corresponds to the generalized polylogarithm $f_2=\Phi_{p, q}(a, b; x)$ for the given parameter values. Here the upper bound of $\Phi_{p, q}(a, b; x)$ depends on the specific values of the parameters $a$, $b$, $p$, and $q$. As these parameters vary, the upper bound adjusts accordingly, indicating that the bound is not uniform across different parameter settings. In contrast, the lower bound of $\Phi_{p, q}(a, b; x)$, represented by the green (dashed) curve, is $x$. This lower bound is independent of the parameters, indicating that it remains uniform regardless of the values of $a$, $b$, $p$, and $q$.
\par

Note that a completely monotonic function is always log-convex, and log-convexity is closely related to the Turan inequality, so the following result addresses the Turan-type inequalities for $\psi(p)$ as defined in \eqref{value of psi(p)}. These type inequalities often arise in the analysis of power series and special functions, including hypergeometric and polylogarithmic functions. Turan inequality were initially introduced for Legendre polynomials in \cite{Turan_1950_origination of turan inequality}. Since then, this inequality has been extensively studied and developed in various directions. For more literature on Turan inequality for various special functions, we refer readers to \cite{Baricz_2007_turan type inequalities for generalized complete elliptic integrals,Baricz_2008_turan inequalities for hypergeometric functions,Swami_Raghav__2013_Turan inequality} and the references therein.

\begin{corollary}
Let the function $\psi(p)$ be defined as in \eqref{value of psi(p)}. For positive real numbers $ x, q, a, b$ with $x\in(0,1)$ and $a,b > 1$ the following Turan-type inequality hold for $\psi(p)$
\begin{align}\label{turan inequality for psi(p)}
	\psi(p) \psi(p+2) - \psi^2(p+1)  \geq 0, \qquad p \in (0, \infty),
\end{align}
\end{corollary}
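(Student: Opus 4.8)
The plan is to leverage the Laplace-transform representation of $\psi(p)$ obtained in Corollary \ref{completely monotonic function involving generalized polylogarithm corollary}, namely equation \eqref{psi(p) integral form},
\[
\psi(p) = \int_0^\infty e^{-pt} x^{j(t)}\, dt,
\]
and to deduce the Turán inequality either from the log-convexity of completely monotone functions or, more directly, via the Cauchy--Schwarz inequality applied to this integral. First I would record that the density $x^{j(t)}$ is strictly positive on $(0,\infty)$, since $x \in (0,1)$ and $j(t)$ is a non-negative integer; consequently $0 < x^{j(t)} \leq 1$, so every integral below converges for $p > 0$. Writing
\[
\psi(p+1) = \int_0^\infty e^{-pt} e^{-t} x^{j(t)}\, dt, \qquad \psi(p+2) = \int_0^\infty e^{-pt} e^{-2t} x^{j(t)}\, dt,
\]
the three functions share the common weight $e^{-pt} x^{j(t)}$, which is precisely the structure that Cauchy--Schwarz exploits.

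Next I would split the integrand of $\psi(p+1)$ symmetrically as
\[
e^{-pt} e^{-t} x^{j(t)} = \Bigl(e^{-pt/2} x^{j(t)/2}\Bigr)\Bigl(e^{-pt/2} e^{-t} x^{j(t)/2}\Bigr),
\]
and apply the Cauchy--Schwarz inequality to these two factors. This yields
\[
\psi(p+1) \leq \left(\int_0^\infty e^{-pt} x^{j(t)}\, dt\right)^{1/2}\left(\int_0^\infty e^{-pt} e^{-2t} x^{j(t)}\, dt\right)^{1/2} = \sqrt{\psi(p)\,\psi(p+2)},
\]
and squaring both sides gives $\psi^2(p+1) \leq \psi(p)\psi(p+2)$, which is exactly the claimed inequality \eqref{turan inequality for psi(p)}.

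There is no genuine obstacle in this argument; the only matters requiring care are the positivity of the density and the finiteness of the integrals for $p > 0$, both of which are immediate from $0 < x^{j(t)} \leq 1$. An equivalent and even shorter route would invoke the remark preceding the statement: a completely monotone function is log-convex, and $\psi$ was already shown to be completely monotone in Corollary \ref{completely monotonic function involving generalized polylogarithm corollary}. Log-convexity means $\psi\bigl((p_1+p_2)/2\bigr) \leq \sqrt{\psi(p_1)\,\psi(p_2)}$ for all $p_1, p_2 > 0$; choosing $p_1 = p$ and $p_2 = p+2$ recovers $\psi^2(p+1) \leq \psi(p)\psi(p+2)$ at once. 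I would present the Cauchy--Schwarz computation as the primary (self-contained) proof and mention the log-convexity shortcut as a conceptual remark.
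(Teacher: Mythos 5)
Your argument is correct, and your primary route is a genuinely different, more self-contained presentation than the paper's. The paper's own proof is exactly your closing ``shortcut'' remark: it cites Widder for the implication ``completely monotone $\Rightarrow$ log-convex,'' writes the log-convexity inequality, and specializes to $\alpha = 1/2$, $p_2 = p_1 + 2$ to obtain \eqref{turan inequality for psi(p)} (incidentally, the paper's displayed exponentiated inequality contains a typographical slip --- a sum $\psi^{\alpha}(p_1) + \psi^{1-\alpha}(p_2)$ where the product $\psi^{\alpha}(p_1)\,\psi^{1-\alpha}(p_2)$ is meant --- which your formulation avoids). Your Cauchy--Schwarz computation applied directly to \eqref{psi(p) integral form}, with the symmetric split of the integrand of $\psi(p+1)$, is precisely the standard proof that the Laplace transform of a positive measure is log-convex, so in substance the two arguments rest on the same fact; the difference is that you unwind the abstract citation into an explicit two-line computation that depends neither on Corollary \ref{completely monotonic function involving generalized polylogarithm corollary} nor on Widder's theorem, and you correctly note the positivity of $\psi(p+1)$ needed to square $\psi(p+1) \leq \sqrt{\psi(p)\,\psi(p+2)}$. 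One caveat, inherited from the paper rather than introduced by you: writing $\psi(p)$, $\psi(p+1)$, $\psi(p+2)$ as integrals against the \emph{common} density $x^{j(t)}$ tacitly assumes $j$ does not vary with $p$, whereas \eqref{value of j(y) and delta} defines $j$ through $\Delta(y) = (y+a)(y+b)^{q/p}$, which involves $p$. The paper makes the same tacit assumption when it invokes Bernstein's theorem to assert complete monotonicity of $\psi$ (a Bernstein representation requires a $p$-independent measure), so your proof is exactly as rigorous as the source's; if you want the Cauchy--Schwarz argument airtight on its own terms, state explicitly that the density $x^{j(t)}$ (equivalently, the exponent $q/p$ in $\Delta$) is held fixed as $p$ is shifted to $p+1$ and $p+2$.
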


\begin{proof}
The complete monotonicity of a function implies its logarithmic convexity (see \cite{Widder_1941_the laplace transform}), it follows that the function $\psi(p)$ is log-convex. Therefore, we have the following inequalities

\begin{align*}
	\log (\psi(\alpha p_1 +(1-\alpha )p_2)) & \leq \alpha \log \psi(p_1) + (1-\alpha) \log \psi(p_2), \quad \alpha \in (0,1), \\
	\psi(\alpha p_1 +(1-\alpha )p_2) & \leq \psi^\alpha(p_1) + \psi^{(1-\alpha)}(p_2), \quad \alpha \in (0,1).
\end{align*}

Now, if we set $\alpha = 1/2$ and $p_2 = p_1+2$, we obtain the inequality \eqref{turan inequality for psi(p)}.
\end{proof}

The log-convexity of $\psi(p)$ and the Turan inequality of $\psi(p)$ are visually demonstrated in the figures below. Figure \ref{log convexity of psi function} illustrates the log-convexity of $\psi(p)$, while Figure \ref{Turan inequality of psi function} depicts the Turan inequality satisfied by $\psi(p)$.

\begin{figure}[H]
	\centering
	\begin{minipage}{0.45\textwidth}
		\centering
		\includegraphics[scale=0.7]{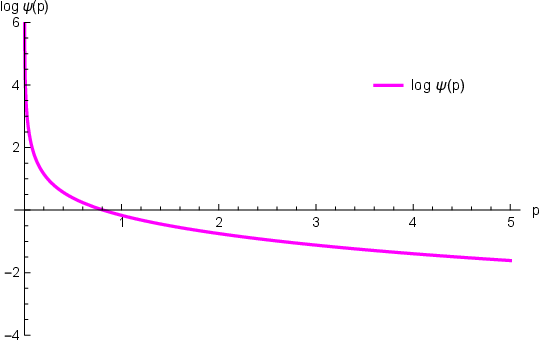}
		\footnotesize
		Graph of $\log(\psi(p))$ for $a=1.5,b=1.2,x=0.5,q=0.75$.
		\caption{\centering{Convexity of $\log \psi(p)$.}}
		\label{log convexity of psi function}
	\end{minipage} \hfill
	\begin{minipage}{0.45\textwidth}
		\centering
		\includegraphics[scale=0.7]{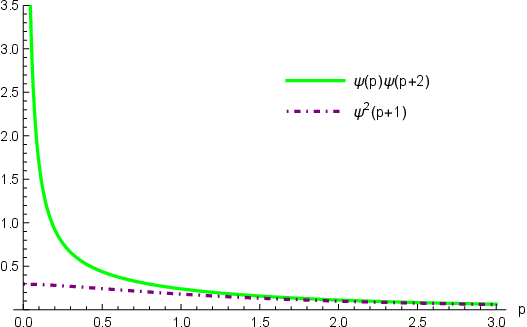}
		\footnotesize
		Graph of $\psi(p) \psi(p+2)$ and $\psi^2(p+1)$ for $a=1.6,b=1.2,x=0.5,q=0.75$.
		\caption{\centering{Turan inequality for $\psi(p)$.}}
		\label{Turan inequality of psi function}
	\end{minipage}
\end{figure}
In Figure \ref{log convexity of psi function}, the graph of $\log \psi(p)$ clearly illustrates its convexity with respect to $p$ over the interval $(0, \infty)$. Similarly, in Figure \ref{Turan inequality of psi function}, Turan's inequality is visually demonstrated, where the green (thick) curve, representing $\psi(p) \psi(p+2)$, consistently lies above the purple (dot-dashed) curve representing $\psi^2(p+1)$, as stated in \eqref{turan inequality for psi(p)}.

The expressions for $\psi(p)$ in \eqref{value of psi(p)} involve the generalized polylogarithm $\Phi_{p,q}(a,b;x)$. Also, we prove that $\psi(p)$ satisfy Turan's inequality. In line with this result, it is natural to ask whether the generalized polylogarithm $\Phi_{p,q}(a,b;x)$ satisfies Turan's inequality. We affirmatively address this question, and demonstrate the result in Theorem \ref{turan inequality of generalized polylog theorem}.

\begin{theorem}\label{turan inequality of generalized polylog theorem}
	Consider the generalized polylogarithm $\Phi_{p,q}(a,b;x)$ as defined in \eqref{generalized polylogarithm equation}. For positive real parameters $a$, $b$, $p$, and $q$, and for $x \in (0,1)$, the following inequalities hold
	\begin{align}\label{turan for generalized polylog for q}
		\Phi_{p,q}(a,b;x)\Phi_{p,q+2}(a,b;x) - \Phi_{p,q+1}^2(a,b;x) \geq 0,
	\end{align}
	and
	\begin{align}\label{turan for generalized polylog for p}
		\Phi_{p,q}(a,b;x)\Phi_{p+2,q}(a,b;x) - \Phi_{p+1,q}^2(a,b;x) \geq 0.
	\end{align}
\end{theorem}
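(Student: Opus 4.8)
The plan is to deduce both Turán inequalities from the log-convexity of $\Phi_{p,q}(a,b;x)$ in the parameter that is being shifted, exactly paralleling the route (complete monotonicity $\Rightarrow$ log-convexity $\Rightarrow$ Turán) used for $\psi(p)$ in \eqref{turan inequality for psi(p)}. For \eqref{turan for generalized polylog for q} I would first isolate the $q$-dependence by writing
\begin{align*}
	\Phi_{p,q}(a,b;x) = (1+a)^p \sum_{k=1}^\infty \frac{x^k}{(k+a)^p}\left(\frac{1+b}{k+b}\right)^q = \sum_{k=1}^\infty A_k\, t_k^{\,q},
\end{align*}
where $A_k = (1+a)^p x^k/(k+a)^p > 0$ and $t_k = (1+b)/(k+b)\in(0,1]$ for every $k\geq 1$, the series converging absolutely for $x\in(0,1)$. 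Setting $t_k^{\,q} = e^{-q\mu_k}$ with $\mu_k = \log\!\left((k+b)/(1+b)\right)\geq 0$ exhibits $q\mapsto \Phi_{p,q}(a,b;x)$ as the Laplace transform of the positive discrete measure $\sum_{k} A_k\,\delta_{\mu_k}$ supported on $[0,\infty)$, so Bernstein's theorem (Theorem \ref{Bernstein theorem}) shows that it is completely monotone in $q$, and hence log-convex in $q$.

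Once log-convexity is in hand, the Turán inequality is the midpoint case: taking $\alpha = 1/2$, $q_1 = q$, $q_2 = q+2$ in the log-convexity estimate $f(\alpha q_1 + (1-\alpha)q_2)\leq f(q_1)^{\alpha} f(q_2)^{1-\alpha}$ with $f(q):=\Phi_{p,q}(a,b;x)$ yields $f(q+1)^2\leq f(q)\,f(q+2)$, which is precisely \eqref{turan for generalized polylog for q}. Equivalently, one may bypass the monotonicity language altogether and read off the inequality directly from Cauchy–Schwarz applied to the series,
\begin{align*}
	\left(\sum_{k=1}^\infty A_k\, t_k^{\,q+1}\right)^2 \leq \left(\sum_{k=1}^\infty A_k\, t_k^{\,q}\right)\left(\sum_{k=1}^\infty A_k\, t_k^{\,q+2}\right),
\end{align*}
obtained by splitting $t_k^{\,q+1} = \left(t_k^{\,q/2}\right)\left(t_k^{\,(q+2)/2}\right)$; this is \eqref{turan for generalized polylog for q} verbatim.

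The inequality \eqref{turan for generalized polylog for p} follows by the symmetric argument, interchanging the roles of the pairs $(a,p)$ and $(b,q)$: write $\Phi_{p,q}(a,b;x) = \sum_{k=1}^\infty B_k\, s_k^{\,p}$ with $B_k = (1+b)^q x^k/(k+b)^q > 0$ and $s_k = (1+a)/(k+a)\in(0,1]$, and repeat the above to obtain log-convexity in $p$, hence \eqref{turan for generalized polylog for p}. I do not expect a genuine obstacle here; the only points needing care are the absolute convergence for $x\in(0,1)$ that justifies the termwise Laplace-transform reading, and the verification that $t_k,s_k\in(0,1]$ (so that $\mu_k\geq 0$ and the representing measure lives on $[0,\infty)$). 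It is worth remarking that this route works directly from the series \eqref{generalized polylogarithm equation} and therefore requires only $a,b,p,q>0$; in particular it does not invoke the integral representation of Theorem \ref{integral representation single integral theorem} and so needs neither $a,b>1$ nor the same-quadrant hypothesis on $p,q$.
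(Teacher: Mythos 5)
Your proof is correct, but it takes a genuinely different route from the paper's. The paper argues at the level of power-series coefficients in $x$: after factoring out $z$ to get $\widetilde{\Phi}_{p,q}$, it forms the Cauchy product of $\widetilde{\Phi}_{p,q}$ and $\widetilde{\Phi}_{p,q+2}$, subtracts $\widetilde{\Phi}_{p,q+1}^2$, and shows that the coefficient of $x^n$ is $\sum_{k=0}^n f(n,k)$ with $f(n,k)$ proportional to $2k-n$; pairing $k$ with $n-k$ gives $f(n,k)+f(n,n-k)$ equal to $(2k-n)^2$ over a positive denominator, hence each coefficient is nonnegative. You instead freeze $x$ and read $q \mapsto \Phi_{p,q}(a,b;x)=\sum_k A_k e^{-q\mu_k}$ as the Laplace transform of a positive discrete measure (complete monotonicity via Theorem \ref{Bernstein theorem}, then log-convexity, then the midpoint case), or more directly apply weighted Cauchy--Schwarz with the split $t_k^{q+1}=t_k^{q/2}\,t_k^{(q+2)/2}$. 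Both arguments are valid, and both in fact need only $a,b,p,q>0$ (the paper's Cauchy-product proof also uses nothing beyond positivity, so your closing remark distinguishes your proof from Corollary \ref{completely monotonic function involving generalized polylogarithm corollary}'s integral-based route rather than from the paper's proof of this theorem). Your route is shorter and yields more in the parameter direction: complete monotonicity and log-convexity of $\Phi_{p,q}$ in $q$ (and in $p$), hence the general two-point inequality $\Phi_{p,(q_1+q_2)/2}^2 \leq \Phi_{p,q_1}\Phi_{p,q_2}$, of which \eqref{turan for generalized polylog for q} is the case $q_2=q_1+2$. The paper's route yields more in the $x$ direction: the difference in \eqref{turan for generalized polylog for q} is a power series with nonnegative coefficients, so it is nonnegative and nondecreasing coefficientwise on $(0,1)$, which a fixed-$x$ Cauchy--Schwarz argument does not show. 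The two points of care you flag are indeed the right ones and are handled correctly: $t_1=s_1=1$ gives $\mu_1=0$, which is admissible since the representing measure may charge $\{0\}$, and the finite total mass $\sum_k A_k \leq (1+a)^p x/(1-x)<\infty$ for $x\in(0,1)$ guarantees finiteness of the Laplace transform for all $q>0$, as the converse half of Bernstein's theorem requires.
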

\begin{proof}
	We begin with the expression for the generalized polylogarithm $\Phi_{p, q}(a,b;x)$ as defined in \eqref{generalized polylogarithm equation}, which can be rewritten as
	\begin{align}\label{modified generalized polylog}
		\Phi_{p, q}(a,b;x)= z \widetilde{\Phi}_{p, q}(a,b;x),
	\end{align}
	where
	\begin{align*}
		\widetilde{\Phi}_{p, q}(a,b;x)= \sum_{k=0}^\infty \frac{(1+a)^p(1+b)^q}{(k+1+a)^p(k+1+b)^q}x^k.
	\end{align*}
	Now, consider the product
	\begin{align*}
		\widetilde{\Phi}_{p, q}(a,b;x) \widetilde{\Phi}_{p, q+2}&(a,b;x) = \\
		&\sum_{k=0}^\infty \frac{(1+a)^p(1+b)^q}{(k+1+a)^p(k+1+b)^q}x^k \sum_{k=0}^\infty \frac{(1+a)^p(1+b)^{q+2}}{(k+1+a)^p(k+1+b)^{q+2}}x^k.
	\end{align*}
	By applying the Cauchy product formula, we obtain
	\begin{align*}
		\widetilde{\Phi}_{p, q}(a,b;x) &\widetilde{\Phi}_{p, q+2}(a,b;x) =\\
		&\sum_{n=0}^{\infty} \sum_{k=0}^{n} \frac{(1+a)^{2p} (1+b)^{2q+2} }{(k+1+a)^p(n-k+1+a)^p(k+1+b)^q(n-k+1+b)^{q+2}} x^n,
	\end{align*}
	and
	\begin{align*}
		\widetilde{\Phi}_{p,q+1}^2&(a,b;x) =\\
		&\sum_{n=0}^{\infty} \sum_{k=0}^{n} \frac{(1+a)^{2p} (1+b)^{2q+2}}{(k+1+a)^p(n-k+1+a)^p(k+1+b)^{q+1}(n-k+1+b)^{q+1}} x^n.
	\end{align*}
	Thus, we can write
	\begin{equation}\label{turan inequality of generalized polylog equation}
		\begin{aligned}
			\widetilde{\Phi}_{p,q}(a,b;x)\widetilde{\Phi}_{p,q+2}(a,b;x) - & \widetilde{\Phi}_{p,q+1}^2(a,b;x) =\\
			&(1+a)^{2p} (1+b)^{2q+2} \sum_{n=0}^{\infty} \sum_{k=0}^{n} f(n,k) x^n,
		\end{aligned}
	\end{equation}
	where
	\begin{align}\label{value of f(n,k)}
		f(n,k)= \frac{2k-n}{(k+1+a)^p(n-k+1+a)^p(k+1+b)^{q+1}(n-k+1+b)^{q+2}}.
	\end{align}
	We now analyze the finite sum $\sum_{k=0}^{n} f(n,k) $.
	\setcounter{case}{0}
	\begin{case}
		If $n$ is even.
	\end{case}
	In this case, we can express $\sum_{k=0}^{n} f(n,k) $ as follows
	\begin{align*}
		\sum_{k=0}^{n} f(n,k) &= \sum_{k=0}^{n/2-1} f(n,k) + \sum_{k=n/2+1}^{n} f(n,k) + f(n,n/2) \\
		&= \sum_{k=0}^{n/2-1} f(n,k) + \sum_{k=0}^{n/2-1} f(n,n-k) + f(n,n/2),
	\end{align*}
	Since $f(n,n/2)$ equals to $0$ from \eqref{value of f(n,k)} therefore, $\sum_{k=0}^{n} f(n,k)$ becomes
	\begin{align}\label{f(n,k) for even n}
		\sum_{k=0}^{n} f(n,k) = \sum_{k=0}^{\lfloor(n-1)/2\rfloor} \left(f(n,k) + f(n,n-k)\right),
	\end{align}
	where $\lfloor \cdot \rfloor$ denotes the greatest integer function.
	\begin{case}
		If $n$ is odd.
	\end{case}
	If $n$ is odd, then we have
	\begin{align*}
		\sum_{k=0}^{n} f(n,k) &= \sum_{k=0}^{(n-1)/2} f(n,k) + \sum_{k=(n+1)/2}^{n} f(n,k).
	\end{align*}
	Combining this into a single summation, we can express it as
	\begin{align}\label{f(n,k) for odd n}
		\sum_{k=0}^{n} f(n,k) = \sum_{k=0}^{\lfloor(n-1)/2\rfloor} \left(f(n,k) + f(n,n-k)\right),
	\end{align}
	where $\lfloor \cdot \rfloor$ is the greatest integer function. Now we observe that
	\begin{align*}
		f(n,k) + f(n,n-k) = \frac{(2k - n)^2}{(k+1+a)^p (n-k+a)^p (k+1+b)^{q+2} (n-k+b)^{q+2}} \geq 0.
	\end{align*}
	From \eqref{f(n,k) for even n} and \eqref{f(n,k) for odd n}, we obtain
	\begin{align*}
		\sum_{k=0}^{n} \left(f(n,k) + f(n,n-k)\right) \geq 0.
	\end{align*}
	Hence, using \eqref{turan inequality of generalized polylog equation}, we conclude that
	\begin{align}\label{turan type of modified generalized polylog}
		\widetilde{\Phi}_{p,q}(a,b;x) \widetilde{\Phi}_{p,q+2}(a,b;x) - \widetilde{\Phi}_{p,q+1}^2(a,b;x) \geq 0.
	\end{align}
	Now from \eqref{modified generalized polylog}, we have the following relationship
	\begin{align}\label{relation between phi and phi tilde}
		\Phi_{p,q}(a,b;x)\Phi_{p,q+2}(a,b;x) &- \Phi_{p,q+1}^2(a,b;x) = \\
		& x^2 \left(\widetilde{\Phi}_{p,q}(a,b;x)\widetilde{\Phi}_{p,q+2}(a,b;x) - \widetilde{\Phi}_{p,q+1}^2(a,b;x)\right).
	\end{align}
By applying \eqref{turan type of modified generalized polylog} in \eqref{relation between phi and phi tilde}, we assert that \eqref{turan for generalized polylog for q} is valid. In the same way, we can prove the result \eqref{turan for generalized polylog for p}.
\end{proof}
We now provide a graphical interpretation of the  inequality for the generalized polylogarithm $\Phi_{p,q}(a,b;x)$.
\begin{figure}[H]
	\footnotesize
	\stackunder[5pt]{\includegraphics[scale=1.0]{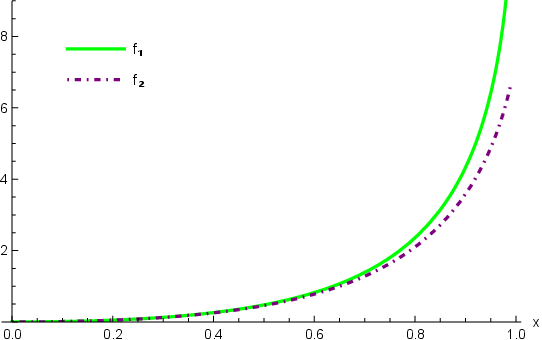}}{Graph of $f_1=\Phi_{p,q}(a,b;x)\Phi_{p,q+2}(a,b;x)$ and $f_2=\Phi_{p,q+1}^2(a,b;x)$ for $p=0.5, q=0.75, a=1.4,  b = 1.8$.}
	\caption{Turan inequality for the $\Phi_{p,q}(a,b;x)$ with respect to $q$}
	\label{fig:Turan inequality of the generalized polylogarithm}
\end{figure}

In the graphical interpretation, we plot the function $f_1=\Phi_{p,q}(a,b;x)\Phi_{p,q+2}(a,b;x)$, shown as the green (thick) curve and $f_2=\Phi_{p,q+1}^2(a,b;x)$ as the purple (dot-dashed) curve. With values $a = 1.4, b = 1.8, p=0.5$ and $q=0.75$, we observe that the green (thick) curve consistently bounds the purple (dot-dashed) curve, illustrating the Turan inequality for $\Phi_{p,q}(a,b;x)$ with respect to $q$, as established in \eqref{turan for generalized polylog for q}. Similarly, we can illustrate \eqref{turan for generalized polylog for p} graphically.

\section{Generalized polylogarithm as a double integral}\label{Generalized polylogarithm as a double integral}

In this section, we derive the integral representation of the generalized polylogarithm, which involves a double integral. To derive the integral form, we begin by decomposing the series of the generalized polylogarithm into the series of Lerch transcendent function using the Hadamard convolution. Subsequently, by applying the integral form of the Lerch transcendent function and solving certain complex integrals, we obtain the result. We conclude the section by demonstrating the usefulness of the Hadamard convolution, presenting another integral form of the generalized polylogarithm.

\begin{theorem}\label{Integral form of the generalized polylogarithm theorem}
Let $p, q$ be complex numbers having $\Re(p) > 0, \Re(q) > 0$, and $a, b$ be any real numbers such that $a, b >1$. The generalized polylogarithm function $ \Phi_{p, q}(a, b; z)$, defined as in \eqref{generalized polylogarithm equation}, admits the following integral representation
\begin{equation}\label{integral form of the generalized polylog as a double integral}
	\Phi_{p, q}(a,b;z) =
	(1+a)^p(1+b)^q \left(\frac{zq}{\Gamma (p)}\int_{0}^{\infty} \int_{0}^{\infty} e^{-qu} e^{-(a+1)v} v^{p-1} \frac{1-(ze^{-v})^{\lfloor e^u-b\rfloor}}{1-ze^{-v}} du dv\right)
\end{equation}
for all $|z|<1$ where $\lfloor \cdot \rfloor$ denotes the greatest integer function.
\end{theorem}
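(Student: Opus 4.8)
The plan is to exploit the factorization of the coefficient of $z^k$ in $\Phi_{p,q}(a,b;z)$: since $(k+a)^{-p}(k+b)^{-q}$ splits over the two parameter pairs, $\Phi_{p,q}(a,b;z)$ is precisely the Hadamard product of the two Lerch-type series $F(z)=(1+a)^p\sum_{k\ge1}z^k/(k+a)^p$ and $G(z)=(1+b)^q\sum_{k\ge1}z^k/(k+b)^q$. I would then feed each factor a \emph{different} integral representation and merge the two, rather than evaluating the Hadamard contour integral directly.

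First I would handle the $(a,p)$-factor with the Eulerian integral $(k+a)^{-p}=\Gamma(p)^{-1}\int_0^\infty v^{p-1}e^{-(k+a)v}\,dv$, valid for $\Re(p)>0$. Inserting this into the series and interchanging sum and integral gives
\begin{align*}
\Phi_{p,q}(a,b;z)=\frac{(1+a)^p(1+b)^q}{\Gamma(p)}\int_0^\infty v^{p-1}e^{-av}\left(\sum_{k=1}^\infty\frac{(ze^{-v})^k}{(k+b)^q}\right)dv,
\end{align*}
so the inner sum is again a Lerch-type series, now at the contracted argument $w=ze^{-v}$ with $|w|\le|z|<1$.

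Next I would represent that inner sum by the Laplace form of a Dirichlet series (Cahen's formula \eqref{Laplace representation of dirichlet series}), exactly as in the proof of Theorem \ref{integral representation single integral theorem}: writing $\sum_{k\ge1}w^k(k+b)^{-q}=\sum_{k\ge1}w^k e^{-q\log(k+b)}$, a Dirichlet series of type $\beta_k=\log(k+b)$ with $\xi=q$, yields
\begin{align*}
\sum_{k=1}^\infty\frac{w^k}{(k+b)^q}=q\int_0^\infty e^{-qu}\sum_{k:\,k+b\le e^u}w^k\,du=q\int_0^\infty e^{-qu}\,w\,\frac{1-w^{\lfloor e^u-b\rfloor}}{1-w}\,du,
\end{align*}
where the partial sum runs over $1\le k\le\lfloor e^u-b\rfloor$ and the finite geometric sum has been carried out. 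Substituting $w=ze^{-v}$, factoring $w=z\,e^{-v}$ so that the $z$ comes out front while $e^{-av}e^{-v}=e^{-(a+1)v}$ merges into the $v$-weight, and collecting the constants then reproduces \eqref{integral form of the generalized polylog as a double integral}.

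The technical heart of the argument is the justification of the two interchanges of summation and integration. For the $v$-integral the weight $v^{p-1}e^{-(a+1)v}$ is integrable at both endpoints because $\Re(p)>0$ and $a+1>0$; for the $u$-integral the factor $e^{-qu}$ with $\Re(q)>0$ dominates the partial geometric sum, which is bounded uniformly in $u$ by $2|w|/(1-|w|)$, so everything is absolutely convergent and Fubini applies on the product space. The one point demanding care, which I expect to be the main obstacle, is the behaviour of the partial sum for small $u$: when $u<\log(1+b)$ the index set $\{k\ge1:k+b\le e^u\}$ is empty and must contribute zero, which forces the convention that negative values of $\lfloor e^u-b\rfloor$ are read as the empty sum (the displayed ratio already vanishes at $\lfloor e^u-b\rfloor=0$). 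Together with checking the Dirichlet-series hypotheses, namely that $\beta_k=\log(k+b)$ is strictly increasing to $\infty$ (immediate since $b>1$), this bookkeeping is the only genuinely delicate step; the remaining manipulations are routine.
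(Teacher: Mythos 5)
Your proof is correct, but it merges the two building blocks differently than the paper does. The paper also starts from the observation that $\Phi_{p,q}(a,b;z)$ is the Hadamard product of two Lerch-type series, but it then evaluates the convolution literally: it writes $\sum_k f_k g_k z^k$ as the contour integral $\frac{1}{2\pi i}\int_{|w|=r} f(z/w)\,g(w)\,\frac{dw}{w}$ from Theorem \ref{hadamard multiplication theorem}, inserts the classical representation \eqref{Lerch transcendent function integral form equation} for the $(a,p)$-factor and the single-integral representation \eqref{Lerch transcendent function integral form with infinite limit} of Corollary \ref{integral form of Lerch transcendent function corollary} (itself a consequence of the Dirichlet--Laplace machinery of Theorem \ref{integral representation single integral theorem}) for the $(b,q)$-factor, expands into four contour integrals $I_1,\dots,I_4$, evaluates each by the Cauchy integral formula, and finishes with the substitution $t=e^{-v}$. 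You bypass the contour integration entirely: the termwise Euler integral $(k+a)^{-p}=\Gamma(p)^{-1}\int_0^\infty v^{p-1}e^{-(k+a)v}\,dv$ contracts the argument to $w=ze^{-v}$, and applying Cahen's formula \eqref{Laplace representation of dirichlet series} at $w$ re-derives, inline and for every $|w|<1$, exactly the content of Corollary \ref{integral form of Lerch transcendent function corollary}; so the ingredients coincide (note that $t=e^{-v}$ turns \eqref{Lerch transcendent function integral form equation} into your Euler integral summed against the geometric kernel) and only the gluing step differs. Your route is the more elementary one --- no residue bookkeeping, and a cleaner Fubini justification on the product space, for which your uniform bound on the partial geometric sums is the right estimate --- and it is in fact more careful than the paper on one point: the convention that $\lfloor e^u-b\rfloor\le 0$ must be read as an empty sum (equivalently, the $u$-integration effectively begins at $\log(1+b)$), which the paper's statement and proof leave implicit, both here and in \eqref{Lerch transcendent function integral form with infinite limit}. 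Your argument also needs only $a,b>0$ rather than $a,b>1$. What the paper's heavier route buys is a showcase of the Hadamard-convolution technique itself, which it reuses at the end of Section \ref{Generalized polylogarithm as a double integral} to recover the known representation \eqref{pounnusamy integral form equation}.
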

\begin{proof}
The generalized polylogarithm \eqref{generalized polylogarithm equation} can be written as
\begin{align}\label{generalized polylogarithm from n=0}
\Phi_{p, q}(a,b;z) = \sum_{k=0}^\infty \frac{(1+a)^p(1+b)^q}{(k+a)^p (k+b)^q}z^k - \frac{(1+a)^p(1+b)^q}{a^p b^q}.
\end{align}
If we take
\begin{align}\label{value of f_k and g_k}
f_k = \frac{1}{{(k+a)}^p}, \quad g_k = \frac{1}{{(k+b)}^q},
\end{align}
then the expression \eqref{generalized polylogarithm from n=0} becomes
\begin{align}\label{polylog eq modified}
\Phi_{p, q}(a,b;z) = (1+a)^p(1+b)^q \sum_{k=0}^\infty f_k g_k z^k - \frac{(1+a)^p(1+b)^q}{a^p b^q}.
\end{align}
Now, we define the functions $f(z)$ and $g(z)$ as
\begin{align}\label{f(z) and g(z) functions}
f(z) = \sum_{k=0}^\infty f_k z^k, \quad g(z)= \sum_{k=0}^\infty g_k z^k, \quad |z| < 1,
\end{align}
where $f_k$ and $g_k$ are given in \eqref{value of f_k and g_k}.

It is evident that $f(z)$ and $g(z)$ are holomorphic functions, with the property that
\begin{align*}
\limsup_{k \to \infty} |f_k|^{1/k} \leq 1, \quad \limsup_{k \to \infty} |g_k|^{1/k} \leq 1.
\end{align*}
Thus, by applying Theorem \ref{hadamard multiplication theorem} of the Hadamard convolution of $f(z)$ and $g(z)$, we obtain the following result
\begin{align}\label{Hadamard integral for Lerch transcendent}
\sum_{k=0}^\infty f_k g_k z^k &= \frac{1}{2\pi i} \int_{|\omega| = r} \left( \sum_{k=0}^\infty \frac{1}{(k+a)^p} \left(\frac{z}{\omega}\right)^k \sum_{k=0}^\infty \frac{1}{(k+b)^q} \omega^k \right) \frac{d\omega}{\omega}, \quad |\omega| < r < 1.
\end{align}
Using \eqref{Lerch transcendent function integral form equation} and \eqref{Lerch transcendent function integral form with infinite limit} in \eqref{Hadamard integral for Lerch transcendent}, we get
\begin{align*}
\sum_{k=0}^\infty f_k g_k z^k &= \frac{1}{2\pi i} \int_{|\omega| = r} \left( \frac{1}{a^p} + \frac{z}{\Gamma(p)} \int_0^1 \left(\log \frac{1}{t}\right)^{p-1} \frac{t^a}{\omega - zt}  dt \right) \\
&\quad \times \left( \frac{1}{b^q} + \frac{\omega}{1 - \omega} + \frac{q \omega}{\omega - 1} \int_0^\infty e^{-q u} \omega^{\lfloor e^u - b \rfloor}  du \right) \frac{d\omega}{\omega}, \quad |z| < r < 1.
\end{align*}
By substituting the value of $\sum_{k=0}^\infty f_k g_k z^k$ into \eqref{polylog eq modified}, we obtain
\begin{equation}\label{generalized polylog biggest integral}
	\begin{aligned}
		\Phi_{p, q}(a, b; z)&=  \frac{(1+a)^p(1+b)^q}{2 \pi i} \Bigg[  \int_{|w|=r} \left(\frac{1}{a^p b^q} + \frac{1}{a^p} \frac{w}{1-w}\right) \frac{dw}{w} \\
		&+\frac{z}{\Gamma (p)} \int_{|w|=r}  \left( \frac{1}{b^q} + \frac{w}{1-w} \right)
		\left( \int_{0}^{1} \left(\log\frac{1}{t}\right)^{p-1} \frac{t^a}{w-zt}  dt \right) \frac{dw}{w} \\
		& + \frac{q}{a^p}\int_{|w|=r} \frac{w}{(w-1)}
		\left( \int_0^\infty e^{-qu} w^{\lfloor e^u-b \rfloor}  du \right) \frac{dw}{w} \\
		& +\frac{zq}{\Gamma (p)}  \int_{|w|=r} \left( \frac{w}{w-1} \right)
		\left( \int_0^\infty e^{-q u} w^{\lfloor e^u-b \rfloor}  du \right)
		\left( \int_{0}^{1} \left(\log\frac{1}{t}\right)^{p-1} \frac{t^a}{w-zt}  dt \right) \frac{dw}{w} \Bigg] \\
		& - \frac{(1+a)^p(1+b)^q}{a^p b^q}.
	\end{aligned}
\end{equation}
Rewriting \eqref{generalized polylog biggest integral} in terms of $I_1$, $I_2$, $I_3$, and $I_4$, we get the following expression
\begin{align}\label{generalized polylog in terms of I's}
	\Phi_{p, q}(a,b;z)=
	(1+a)^p(1+b)^q (I_1+I_2+I_3+I_4)-\frac{(1+a)^p(1+b)^q}{a^p b^q}.
\end{align}
where the $I_n$'s are defined as follows
\begin{equation*}
\begin{aligned}
	&I_1= \frac{1}{2\pi i}\int_{|w|=r} \left(\frac{1}{a^p b^q}+\frac{1}{a^p} \frac{w}{1-w}\right)\frac{dw}{w}.\\
	&I_2=\frac{1}{2\pi i}\int_{|w|=r} \frac{z}{\Gamma (p)} \left( \frac{1}{b^q}+\frac{w}{1-w} \right) \left(\int_{0}^{1} \left(\log\frac{1}{t}\right)^{p-1} \frac{t^a}{w-zt}  dt \right) \frac{dw}{w}.\\
	&I_3=\frac{1}{2\pi i}\int_{|w|=r} \frac{qw}{a^p(w-1)} \left( \int_0^\infty e^{-qu} w^{\lfloor e^u-b \rfloor} du \right) \frac{dw}{w}.\\
	&I_4 = \frac{1}{2\pi i}\int_{|w|=r} \left( \frac{zq w}{\Gamma (p) (w-1)} \right)\left( \int_0^\infty e^{-q u} w^{\lfloor e^u-b \rfloor} du \right) \left( \int_{0}^{1} \left(\log\frac{1}{t}\right)^{p-1} \frac{t^a}{w-zt}  dt \right) \frac{dw}{w}.		
\end{aligned}
\end{equation*}
Now, we first find the values of $I_1$, $I_2$, $I_3$, and $I_4$.

Since the complex integral $I_1$ is defined over a closed curve $|w|=r$, we apply the Cauchy integral formula, which gives us
\begin{equation}\label{I_1 integral value}
	I_1 = \frac{1}{a^p b^q}.
\end{equation}
The integral $I_2$ can be written as;
\begin{align*}
	I_2 &= \frac{z}{\Gamma (p)}\int_{0}^{1}  t^a  \left(\log\frac{1}{t}\right)^{p-1} \underbrace{\left(\frac{1}{2\pi i}\int_{|w|=r}  \left( \frac{1}{b^q (w-zt)}+\frac{w}{(1-w)(w-zt)} \right)  \frac{dw}{w}\right)}_C  dt.
\end{align*}
Again, we use the Cauchy integral formula to evaluate the integral $C$, and we obtain: $C=\frac{1}{1-zt}$.
Therefore, $I_2$ becomes
\begin{align}\label{I_2 integral value}
	I_2 = \frac{z}{\Gamma (p)}\int_{0}^{1}  \left(\log\frac{1}{t}\right)^{p-1} \frac{ t^a }{1-zt} dt.
\end{align}
In the same manner, we find the integrals $I_3$ and $I_4$, which are
\begin{align}\label{I_3 and I_4 integral value}
I_3=0, \quad I_4= \frac{zq}{\Gamma (p)}\int_{0}^{1} \int_0^\infty \left(\log\frac{1}{t}\right)^{p-1} e^{-q u} (zt)^{\lfloor e^u-b \rfloor}  \frac{t^a}{zt-1}  du dt.
\end{align}
Substituting the value of $I_1$ from \eqref{I_1 integral value}, $I_2$ from \eqref{I_2 integral value}, and $I_3$ and $I_4$ from \eqref{I_3 and I_4 integral value} into \eqref{generalized polylog in terms of I's}, we obtain the following expression.
\begin{equation}\label{generalized polylog single-double integral}
	\begin{aligned}
		\Phi_{p, q}(a,b;z)=&(1+a)^p(1+b)^q \Bigg(\frac{1}{a^pb^q} + \frac{z}{\Gamma (p)}\int_{0}^{1}  \left(\log\frac{1}{t}\right)^{p-1} \frac{ t^a }{1-zt} dt\\
		&+\frac{zq}{\Gamma (p)}\int_{0}^{1} \int_0^\infty \left(\log\frac{1}{t}\right)^{p-1} e^{-q u} (zt)^{\lfloor e^u-b \rfloor}  \frac{t^a}{zt-1}  du dt \Bigg)-\frac{(1+a)^p(1+b)^q}{a^p b^q}.\\
	\end{aligned}
\end{equation}
By applying simple computations to \eqref{generalized polylog single-double integral}, we arrive at		
\begin{equation*}
	\begin{aligned}	
		\Phi_{p, q}(a,b;z)= &  (1+a)^p(1+b)^q \Bigg(\frac{zq}{\Gamma (p)}\int_{0}^{1} \int_{0}^{\infty} e^{-qu}\left(\log\frac{1}{t}\right)^{p-1} \frac{ t^a }{1-zt} du dt\\
		& +\frac{zq}{\Gamma (p)}\int_{0}^{1} \int_0^\infty \left(\log\frac{1}{t}\right)^{p-1} e^{-q u} (zt)^{\lfloor e^u-b \rfloor}  \frac{t^a}{zt-1}  du dt \Bigg).\\
		= & (1+a)^p(1+b)^q \Bigg(\frac{zq}{\Gamma (p)}\int_{0}^{1} \int_{0}^{\infty} e^{-qu}\left(\log\frac{1}{t}\right)^{p-1} \frac{ t^a }{1-zt}(1-(zt)^{\lfloor e^u-b \rfloor}) du dt \Bigg).
	\end{aligned}
\end{equation*}
Now, substituting $t = e^{-v}$, the remaining part of the proof follows.
\end{proof}
We have obtained the integral representation of the generalized polylogarithm $\Phi_{p, q}(a, b; z)$, where the Hadamard convolution played a pivotal role in the derivation. Not only this, but by the same convolution, we can derive other integral forms as well. One such example is shown below, where we found another integral representation of the generalized polylogarithm.

So, from \eqref{Lerch transcendent function integral form equation}, we have
\begin{align}\label{hurwitz zeta integral form 1}
	\sum_{n=0}^{\infty} \frac{z^n}{(n+a)^p} = \frac{1}{a^p} + \frac{z}{\Gamma(p)} \int_{0}^{1} \left(\log\frac{1}{t}\right)^{p-1} \frac{t^a}{1-zt}  dt, \quad |z| < 1,
\end{align}
and
\begin{align}\label{hurwitz zeta integral form 2}
	\sum_{n=0}^{\infty} \frac{z^n}{(n+b)^q} = \frac{1}{b^q} + \frac{z}{\Gamma(q)} \int_{0}^{1} \left(\log\frac{1}{u}\right)^{q-1} \frac{u^b}{1-zu}  du, \quad |z| < 1,
\end{align}
where $a$, $b$, $p$, and $q$ are complex numbers satisfying $\Re(a) > 0$, $\Re(p) > 0$, $\Re(b) > 0$, and $\Re(q) > 0$.

We apply Theorem \ref{hadamard multiplication theorem} to the holomorphic functions given in \eqref{hurwitz zeta integral form 1} and \eqref{hurwitz zeta integral form 2}, which results in
\begin{align}\label{integral form of hadamard convolution of hurwitz_zeta}
	\sum_{n=0}^{\infty} \frac{z^n}{(n+a)^p(n+b)^q} = \frac{1}{2 \pi i} \int_{|w|=r} \left( \sum_{n=0}^{\infty} \frac{z^n}{(n+a)^p} \sum_{n=0}^{\infty} \frac{z^n}{(n+b)^q} \right) \frac{dw}{w}.
\end{align}
Substituting \eqref{hurwitz zeta integral form 1} and \eqref{hurwitz zeta integral form 2} into \eqref{integral form of hadamard convolution of hurwitz_zeta} gives
\begin{align*}
	\sum_{n=0}^{\infty} \frac{z^n}{(n+a)^p(n+b)^q} = \frac{1}{2 \pi i} &\int_{|w|=r} \left( \frac{1}{a^p} + \frac{z}{\Gamma(p)} \int_{0}^{1} \left(\log\frac{1}{t}\right)^{p-1} \frac{t^a}{1-zt}  dt \right) \\
	&\times \left( \frac{1}{b^q} + \frac{z}{\Gamma(q)} \int_{0}^{1} \left(\log\frac{1}{u}\right)^{q-1} \frac{u^b}{1-zu}  du \right) \frac{dw}{w}.
\end{align*}
Solving the above integral using complex integral formulas and applying simple computations, we obtain the expression
\begin{align}\label{pounnusamy integral form equation}
	\Phi_{p, q}(a, b; z) =  \frac{(1+a)^p (1+b)^q}{\Gamma(p) \Gamma(q)} \int_{0}^{1} \int_{0}^{1} z \left(\log \frac{1}{u}\right)^{p-1} \left(\log \frac{1}{v}\right)^{q-1} \frac{u^a v^b}{1 - uvz}  du  dv, \quad |z| < 1,
\end{align}
where $p$, $q$, $a$, and $b$ are complex numbers with positive real parts.

Thus, we have obtained another integral form. However, the representation \eqref{pounnusamy integral form equation} is already presented in \cite{Ponnusamy_1996_Polylogarithms in the theory of univalent functions} and has been widely used to study the behavior of the generalized polylogarithm $\Phi_{p, q}(a, b; z)$ in the complex plane, as noted in \cite{Saiful_Swami_2010_geometric properties of polylogairthm, Ponnusamy_1996_Polylogarithms in the theory of univalent functions}.

\textbf{Concluding Remarks:} This manuscript primarily aimed to derive integral representations of the generalized polylogarithm $\Phi_{p, q}(a, b; z)$. Since the generalized polylogarithm is a form of Dirichlet series, we applied the Laplace representation of Dirichlet series under certain constraints on the parameters $p$, $q$, $a$, and $b$. Using this approach, we derived the single integral form of the generalized polylogarithm in \eqref{integral representation single integral equation}. This integral form allowed us to explore several properties, including complete monotonicity, the Turan inequality, and the convexity of the generalized polylogarithm and related functions.

Subsequently, we derived another integral form \eqref{integral form of the generalized polylog as a double integral} where first we found the integral form of the Lerch transcendent function. After that by using  the Hadamard product and the integral form of Lerch transcendent function we obtain the integral form of the generalized polylogairthm.

In terms of comparison between the two integrals, \eqref{integral representation single integral equation} and \eqref{integral form of the generalized polylog as a double integral}, the integral in \eqref{integral form of the generalized polylog as a double integral} is valid over a larger domain compared to \eqref{integral representation single integral equation}. However, the representation in \eqref{integral representation single integral equation} enables us to discuss a wider range of analytical properties of the generalized polylogarithm and related functions.


\begin{thebibliography}{5}
		\bibitem{Lewin_polylogarithmic ladders}
		M. Abouzahra\ and\ L. Lewin, Polylogarithmic ladders, in {\it Structural properties of polylogarithms}, 27--47, Math. Surveys Monogr., 37, Amer. Math. Soc., Providence, RI.
		
		\bibitem{Abramowitz_1964_Handbook of mathematical functions}
		M. Abramowitz\ and\ I.~A. Stegun, {\it Handbook of mathematical functions with formulas, graphs, and mathematical tables}, National Bureau of Standards Applied Mathematics Series, No. 55, U. S. Government Printing Office, Washington, DC, 1964.
		
		
        \bibitem{Alkan_2014_Series representations in the spirit of Ramanujan}
		E. Alkan, Series representations in the spirit of Ramanujan, J. Math. Anal. Appl. {\bf 410} (2014), no.~1, 11--26.
		
		\bibitem{Andrews_1999_Special functions}
		G.~E. Andrews, R.~A. Askey\ and\ R. Roy, {\it Special functions}, Encyclopedia of Mathematics and its Applications, 71, Cambridge Univ. Press, Cambridge, 1999.
		\bibitem{Alzer_1998_Inequalities for the polygamma functions}
		 H. Alzer\ and\ J.~H. Wells, Inequalities for the polygamma functions, SIAM J. Math. Anal. {\bf 29} (1998), no.~6, 1459--1466.
		
		
		\bibitem{Alzer_2006_Series representations for some mathematical constants}
		H. Alzer, D. Karayannakis\ and\ H.~M. Srivastava, Series representations for some mathematical constants, J. Math. Anal. Appl. {\bf 320} (2006), no.~1, 145--162.
		
		
		
		\bibitem{Alzer_2008_Series representations for gamma and other mathematical constants}
		H. Alzer\ and\ S. Koumandos, Series representations for $\gamma$ and other mathematical constants, Anal. Math. {\bf 34} (2008), no.~1, 1--8.
		
		
		\bibitem{Alzer_2009_Series and product representations for some mathematical constants}
		H. Alzer\ and\ S. Koumandos, Series and product representations for some mathematical constants, Period. Math. Hungar. {\bf 58} (2009), no.~1, 71--82.
		
		
		\bibitem{Alzer_2016_Series representations for special functions and mathematical constants}
		H. Alzer\ and\ K.~C. Richards, Series representations for special functions and mathematical constants, Ramanujan J. {\bf 40} (2016), no.~2, 291--310.
		
	
		\bibitem{Baricz_2007_turan type inequalities for generalized complete elliptic integrals}
	    A Baricz, Turan type inequalities for generalized complete elliptic integrals, Math. Z. {\bf 256} (2007), no.~4, 895--911.
	
		\bibitem{Baricz_2008_turan inequalities for hypergeometric functions}
		A Baricz, Turan type inequalities for hypergeometric functions, Proc. Amer. Math. Soc. 136 (9) (2008) 3223–3229
			
			
		\bibitem{Swami_Raghav__2013_Turan inequality}		
	A. Baricz, K. Raghavendar and A. Swaminathan, Tur'an type inequalities for $q$-hypergeometric functions, J. Approx. Theory {\bf 168} (2013), 69--79.
		
		 \bibitem{Berg_2012_ A completely monotonic function used in an inequality of Alzer}
		C. Berg\ and\ H.~L. Pedersen, A completely monotonic function used in an inequality of Alzer, Comput. Methods Funct. Theory {\bf 12} (2012), no.~1, 329--341.
		
		\bibitem{Berg_2024_A complete Bernstein function related to the fractal dimension of Pascal’s pyramid modulo a prime}
		C. Berg, A complete Bernstein function related to the fractal dimension of Pascal’s pyramid modulo a prime, Expositiones Mathematicae (2024).
		
	
		
		
		\bibitem{Borwein_1995_On an intriguing integral and some series related to zeta at 4}
		D. Borwein\ and\ J.~M. Borwein, On an intriguing integral and some series related to $\zeta(4)$, Proc. Amer. Math. Soc. {\bf 123} (1995), no.~4, 1191--1198.
		
			\bibitem{Cahen_1894_Laplace representation of dirichlet series}
		E. Cahen, Sur la fonction $\zeta(s)$ de Riemann et sur des fonctions analogues, Ann. Sci. \'{E}cole Norm. Sup. (3) {\bf 11} (1894), 75--164.
		
		\bibitem{Cvijovic_2007_integral representation of polylogarithm}
		D. Cvijovic, New integral representations of the polylogarithm function, Proc. R. Soc. Lond. Ser. A Math. Phys. Eng. Sci. {\bf 463} (2007), no.~2080, 897--905.
		
		
		\bibitem{Ewell_1992_An Eulerian method for representing pi square by series}
		J.~A. Ewell, An Eulerian method for representing $\pi^2$ by series, Rocky Mountain J. Math. {\bf 22} (1992), no.~1, 165--168.
		
		\bibitem{Hardy_1964_Dirichlet series}
		G.~H. Hardy\ and\ M. Riesz, {\it The general theory of Dirichlet's series}, Cambridge Tracts in Mathematics and Mathematical Physics, No. 18, Stechert-Hafner, Inc., New York, 1964.
		
		 	\bibitem{Nielsen_1986_generalized polylogarithm}
		 K.~S. K\"{o}lbig, Nielsen's generalized polylogarithms, SIAM J. Math. Anal. {\bf 17} (1986), no.~5, 1232--1258.
		
		\bibitem{Koumandos_2015_On asymptotic expansions of generalized Stieltjes functions}
		S. Koumandos\ and\ H.~L. Pedersen, On asymptotic expansions of generalized Stieltjes functions, Comput. Methods Funct. Theory {\bf 15} (2015), no.~1, 93--115.
		
	
		
		\bibitem{Leibniz_1962_polylogarithm}
		G.~W. Leibniz, {\it Mathematische Schriften. Bd. V}, Georg Olms Verlagsbuchhandlung, Hildesheim, 1962.
		
	
		
		
		\bibitem{Lewin_1981_polylogarithm and associated functions}
		L. Lewin, {\it Polylogarithms and associated functions}, North-Holland, New York, 1981.
		
		\bibitem{Maximon_2003_the dilogarithm function for complex argument}
		L.~C. Maximon, The dilogarithm function for complex argument, R. Soc. Lond. Proc. Ser. A Math. Phys. Eng. Sci. {\bf 459} (2003), no.~2039, 2807--2819.
		
		
		\bibitem{Pogany_2005_integral representation of mathieu series}
		T.~K. Pogany, Integral representation of Mathieu $(\mathbb a,\lambda)$-series, Integral Transforms Spec. Funct. {\bf 16} (2005), no.~8, 685--689.
		
		
		\bibitem{Pogany_2018_integral form of le roy function}
		T.~K. Pogany, Integral form of Le Roy--type hypergeometric function, Integral Transforms Spec. Funct. {\bf 29} (2018), no.~7, 580--584.
		
		\bibitem{Ponnusamy_1996_Polylogarithms in the theory of univalent functions}
		S. Ponnusamy\ and\ S. Sabapathy, Polylogarithms in the theory of univalent functions, Results Math. {\bf 30} (1996), no.~1-2, 136--150.
		
		\bibitem{Prudnikov_1988_integral and series vol 2}
		A. Prudnikov, Y.~A. Brychkov\ and\ O.~I. Marichev, {\it Integrals and series. Vol. 2}, translated from the Russian by N. M. Queen, second edition, Gordon \& Breach, New York, 1988.
		%
		\bibitem{Prudnikov_1990_integral and series vol 3}
		A. Prudnikov, Y.~A. Brychkov\ and\ O.~I. Marichev, {\it Integrals and series. Vol. 3}, translated from the Russian by G. G. Gould, Gordon and Breach, New York, 1990.
		%
		
		
		
		
			\bibitem{Saiful_Swami_2010_geometric properties of polylogairthm}
		S.~R. Mondal\ and\ A. Swaminathan, Geometric properties of generalized polylogarithm, Integral Transforms Spec. Funct. {\bf 21} (2010), no.~9-10, 691--701.
		
		\bibitem{Pedersen_2005_On the remainder in an asymptotic expansion of the double gamma function}
		 H.~L. Pedersen, On the remainder in an asymptotic expansion of the double gamma function, Mediterr. J. Math. {\bf 2} (2005), no.~2, 171--178.
		
			 \bibitem{Rainville_1960_Special functions}
		E.~D. Rainville, {\it Special functions}, The Macmillan Company, New York, 1960.
		
		
		\bibitem{Schilling_2021_bernstein function}
	R. Schilling, R. Song\ and\ Z. Vondra\v{c}ek, {\it Bernstein functions}, second edition, De Gruyter Studies in Mathematics, 37, de Gruyter, Berlin, 2012.
		
			\bibitem{Turan_1950_origination of turan inequality}
		P. Turan, On the zeros of the polynomials of Legendre, \v{C}asopis P\v{e}st. Mat. Fys. {\bf 75} (1950), 113--122.
		
	
		
		
		
		
		
	
		
		
		\bibitem{Titchmarsh_1958_theory of functions}
		E.~C. Titchmarsh, {\it The theory of functions}, reprint of the second (1939) edition, Oxford Univ. Press, Oxford, 1958.
		
		\bibitem{Widder_1941_the laplace transform}
		D.~V. Widder, {\it The Laplace Transform}, Princeton Mathematical Series, vol. 6, Princeton Univ. Press, Princeton, NJ, 1941.
	
	\bibitem{Whittaker_1927_A course of modern analysis}
	E.~T. Whittaker\ and\ G.~N. Watson, {\it A course of modern analysis}, reprint of the fourth (1927) edition, Cambridge Mathematical Library, Cambridge Univ. Press, Cambridge, 1996.
		
		
		
		%
	\end{thebibliography}
\end{document}